\title{Iterated integrals and the loop product}
\date{}
\author{Koichi Fujii}
\begin{document} 

\newtheorem{theorem}{Theorem}[section]
\newtheorem{lemma}[theorem]{Lemma}
\newtheorem{proposition}[theorem]{Proposition}

\maketitle

\section{Introduction} 

The purpose of this paper is to describe string topology from the viewpoint of Chen's iterated integrals. Let $M$ be a compact closed oriented $d$-manifold and $LM$ be the free loop space of $M$, the set of unbased smooth maps from $S^1$ to $M$. 
Let $\mathbb{H}_*(LM)$  be the homology of the free loop space shifted by the dimension of the manifold i.e. $\mathbb{H}_*(LM)$ = $H_{*+d}(LM)$. Chas and Sullivan found the product on $\mathbb{H}_*(LM)$  which they called {\it loop product} \cite{chassullivan}:
$$
\mathbb{H}_p(LM) \otimes \mathbb{H}_q(LM) \rightarrow \mathbb{H}_{p+q}(LM).
$$
They showed that this product makes $\mathbb{H}_*(LM)$ an associative, commutative algebra.

Merkulov constructed a model for this product based on the theory of iterated integrals, especially of the formal power series connection \cite{merkulov}. He showed that there is an isomorphism of algebras
$$
\mathbb{H}_*(LM) \cong H_*(\Lambda M \otimes \mathbb{R} \bigl\langle \langle X \rangle \bigr\rangle )
$$
where $\Lambda M$ is the de Rham differential graded algebra of $M$ and $\mathbb{R} \bigl\langle \langle X \rangle \bigr\rangle$ is the formal completion of the free graded associative algebra generated by some noncommutative indeterminates.

On the other hand, Chen showed that the cohomology of the free loop space of the simply-connected manifold is isomorphic to the cohomology of the cyclic bar complex of differential forms via Chen's iterated integrals (see \cite{chen77.2} or \cite{gjp}):
$$
H^*(LM) \cong H^*(C(\Lambda M)).
$$

In this paper, we construct a model for the loop product based on the theory of the cyclic bar complex. We define a complex ${\rm{Hom}}(B(\Lambda M), \Lambda M)$ and its subcomplex ${\rm{\overline{Hom}}}(B(\Lambda M), \Lambda M)$ so that the $\rm{Poincar\acute{e}}$ duality induces the isomorphism of vector spaces
$$
H_*({\rm{Hom}}(C(\Lambda M), \mathbb{R})) \cong H_{*-d}(\overline{{\rm{Hom}}}(B(\Lambda M), \Lambda M)).
$$
We can define a product on ${\rm{\overline{Hom}}}(B(\Lambda M), \Lambda M)$ which realizes the loop product.
\begin{theorem}
 Let $M$ be a compact closed oriented simply-connected manifold. Assume that $H_*(M)$ is of finite type. Let $A$ be a differential graded subalgebra of $\Lambda M$ such that $H^*(A)$ $\cong$ $H^*(\Lambda M)$ by the inclusion. Then there is an isomorphism of associative, commutative algebras 
$$
\mathbb{H}_*(LM) \cong H_*(\overline{{\rm{Hom}}}(B(A), A)). 
$$ 
The product defined on $H_*(\overline{{\rm{Hom}}}(B(A), A))$ corresponds to the loop product under the isomorphism.
\end{theorem}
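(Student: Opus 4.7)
The plan is to prove the theorem in two stages: first establish the result for $A = \Lambda M$, then deduce the general case by an invariance-under-quasi-isomorphism argument. For the first stage I would assemble three ingredients already built or cited in the paper. From Chen's theorem I get $H^*(LM) \cong H^*(C(\Lambda M))$; the simply-connectedness and finite-type hypotheses let me dualize via the universal coefficient theorem to obtain $H_*(LM) \cong H_*(\mathrm{Hom}(C(\Lambda M),\mathbb{R}))$; and the Poincar\'e duality isomorphism recorded in the introduction then shifts this to $\mathbb{H}_*(LM) \cong H_*(\overline{\mathrm{Hom}}(B(\Lambda M),\Lambda M))$ as graded vector spaces.

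To upgrade this vector-space isomorphism to an algebra isomorphism, I would examine the product constructed on $\overline{\mathrm{Hom}}(B(\Lambda M),\Lambda M)$ at the chain level and show that under the Chen--Poincar\'e map above it corresponds to the Chas--Sullivan loop product. The geometric picture is the figure-eight decomposition: a pair of chains $\alpha,\beta$ in $LM$ whose evaluations at the basepoint meet transversely in $M$ can be concatenated, and iterated integrals split multiplicatively along the concatenation point. I would make this precise by writing a chain-level formula in which (i) restricting to the incidence subspace $\mathrm{ev}^{-1}(\Delta_M)\subset LM\times LM$ corresponds to pairing two bar words through $\Lambda M$, and (ii) the intersection with the diagonal, via the chosen orientation and Poincar\'e duality, corresponds to applying the $\overline{\mathrm{Hom}}$ evaluation landing in $\Lambda M$. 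Comparison with Merkulov's formal-power-series-connection model provides an independent check, since the cyclic bar complex is related to his model by the standard equivalence between the bar construction and iterated integrals.

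For the second stage, passage from $\Lambda M$ to $A$, I would exploit that the inclusion $\iota:A\hookrightarrow \Lambda M$ is a quasi-isomorphism of differential graded algebras. Filtering the bar construction by word length, the induced map $B(\iota):B(A)\to B(\Lambda M)$ is a quasi-isomorphism on each filtration quotient and hence an overall quasi-isomorphism; because $H_*(M)$ has finite type the associated spectral sequence converges. Then $\mathrm{Hom}(B(\iota),\iota)$ is a quasi-isomorphism on $\overline{\mathrm{Hom}}$, and since $\iota$ is an algebra map the induced map is multiplicative for the product defined on $\overline{\mathrm{Hom}}(B(-),-)$. Combining with stage one yields the desired isomorphism of associative, commutative algebras.

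The main obstacle is the chain-level identification of the combinatorial product on $\overline{\mathrm{Hom}}(B(\Lambda M),\Lambda M)$ with the geometrically defined loop product. The vector-space isomorphism and the transfer from $\Lambda M$ to $A$ are essentially formal once the relevant quasi-isomorphisms are in place, but making the figure-eight/intersection picture compatible on the nose with the cyclic bar complex requires a careful choice of Poincar\'e dual representative and a delicate sign/orientation bookkeeping; this is where I would expect to spend most of the work, and where comparison with Merkulov's model is most useful as a consistency check.
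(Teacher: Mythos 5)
Your stage-one vector-space isomorphism and your stage-two transfer along the quasi-isomorphism $A\hookrightarrow\Lambda M$ are broadly in line with what the paper does (Theorem 4.1, Lemma 4.2, and the reduction via Proposition 3.2, though the paper reduces to a subalgebra with $\overline{A}^0=\mathbb{R}$ rather than enlarging to $\Lambda M$, since Chen's Theorem 3.1 needs that normalization). The genuine gap is exactly the step you yourself flag as ``the main obstacle'': the identification of the cup product on $\overline{\mathrm{Hom}}(B(A),A)$ with the Chas--Sullivan loop product. You describe an intended chain-level figure-eight argument -- restricting to $\mathrm{ev}^{-1}(\Delta_M)$, intersecting with the diagonal via a Poincar\'e dual representative, splitting iterated integrals at the concatenation point -- but you give no mechanism for carrying it out. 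The loop product is not defined on arbitrary chains of $LM\times LM$; it requires transversality or a Thom--Pontryagin collapse in an infinite-dimensional setting, and the assertion that this geometric operation corresponds on the nose to the algebraic evaluation pairing in $\overline{\mathrm{Hom}}(B(A),A)$ is precisely the content to be proved, not a bookkeeping matter of signs and orientations. Invoking Merkulov's model as a ``consistency check'' does not close this, since no comparison map between the two models is constructed.

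The paper avoids the chain-level identification entirely. Both isomorphisms (Theorem 4.1 and Lemma 4.2) are proved by comparing spectral sequences, and the multiplicative statement is obtained by computing the cup product of $\overline{\mathrm{Hom}}(B(A),A)$ on the $E_2$-term, where it becomes the wedge product on $H^*(A)$ tensored with the product on $H_*(B(A)^\vee)$ dual to the bar coproduct, and then invoking the theorem of Cohen--Jones--Yan: for simply-connected $M$ the Serre-type spectral sequence of the free loop fibration is multiplicative and converges to $\mathbb{H}_*(LM)$ as an algebra, with $E_2$-product given by intersection product on the base and Pontryagin product on the fiber. Under Poincar\'e duality and Chen's coalgebra isomorphism $H^*(B(A))\cong H^*(\Omega M)$ these two $E_2$-products coincide, which yields the algebra isomorphism on the abutment. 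If you want to complete your proposal, you either need to supply the full chain-level comparison you sketch (a substantial and delicate construction), or replace it, as the paper does, by this $E_2$-level comparison together with the multiplicative convergence result of Cohen--Jones--Yan, which your proposal never mentions.
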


The paper is organized in the following way. In section 2, we briefly review Chen's iterated integrals.  In section 3, we give a construction of  a complex ${{\rm{Hom}}}(B(A), A)$, and discuss its properties. In section 4, we give a proof of theorem 1.1. In section 5, we study the iterated integrals on the free loop space of the non-simply-connected manifolds. In section 6, we describe a relation between the product on ${{\rm{Hom}}}(B(A), A)$ and the Goldman bracket. In this paper, all the homologies have their coefficients in the field of real numbers.

{\it Acknowledgement}: The author would like to thank Professor Toshitake Kohno much for helpful comments and gentle support.

\section{Chen's iterated integrals}
We briefly review Chen's iterated integrals (see \cite{chen77.2}, or \cite{gjp}). Let $M$ be a finite dimensional smooth manifold and let $LM$ be the free loop space of $M$, that is the space of all smooth maps from $S^1$ to $M$. Let $\Delta_k$ be the $k$-simplex 
$$
\{(t_1, \cdots, t_k) \in \mathbb{R}^k \ | \ 0 \leq t_1 \leq \cdots \leq t_k \leq 1 \}.
$$
We have an evaluation map
$$
\Phi_k : \Delta_k\times LM \rightarrow M^k \\
$$  
defined by
$$
\Phi_k(t_1, \cdots, t_k; \gamma) = (\gamma(t_1), \cdots, \gamma(t_k)).
$$  Then define $P_k$ to be the composition
$$
(\Lambda^*M)^{\otimes k} \rightarrow \Lambda^*M^k \buildrel \Phi_k^* \over \rightarrow \Lambda^*(\Delta_k\times LM) \buildrel p_* \over \rightarrow \Lambda^{*-k}LM
$$
where $p_*$ is the integration along the fiber of the projection $p : \Delta_k\times LM \rightarrow LM $. 

Given $\omega_1$, $\cdots \omega_k$ $\in$ $\Lambda^*M$, the {\it iterated integral}
$$
\int\omega_1\cdots\omega_k
$$
is a differential form on $LM$ of total degree $|\omega_1|+\cdots|\omega_k|-k$, defined by the formula 
$$
\int \omega_1\cdots \omega_k = (-1)^{(k-1)|\omega_1|+(k-2)|\omega_2|+\cdots+|\omega_{k-1}|+k(k-1)/2}P_k(\omega_1,\cdots, \omega_k).
$$ 

\section{Preliminaries}
In this section, we give a construction of some complexes. Let $A$ be an arbitrary differential graded algebra in this section. Let $A^{\vee}$ denote the dual of $A$. The bar complex of $A$, $(B(A), d_B)$, is defined by
$$
B(A) = \oplus_{r \geq 0}\otimes^{r} sA, 
$$
\begin{eqnarray*}
d_{B}(\omega_{1},\cdots ,\omega_{r} ) = -(-1)^{\varepsilon_{i-1}}\sum_i(\omega_1, \cdots, \omega_{i-1},d\omega_i, \omega_{i+1}, \cdots, \omega_r)  \\ 
- (-1)^{\varepsilon_{i}}\sum_i(\omega_1, \cdots, \omega_{i-1}, \omega_{i}\wedge\omega_{i+1},\omega_{i+2}, \cdots, \omega_r).
\end{eqnarray*}
Here $(sA)^q = A^{q+1}$ or $A^q$ according as 0 $\leq$ $q$ or 0 $<$ $q$, and $\varepsilon_i = deg(\omega_1,\cdots,\omega_i)$. We denote the totality of degree $n$ elements by $B(A)_n$. The coproduct $H^*(B(A))$ $\rightarrow$ $H^*(B(A))\otimes H^*(B(A))$ is defined by
$$
(\omega_1, \cdots, \omega_n) \mapsto \sum_i (\omega_1, \cdots, \omega_i) \otimes (\omega_{i+1}, \cdots, \omega_n). 
$$
\\
Chen proved the following theorem.

\begin{theorem}[Chen \cite{chen77.2}] Let $M$ be a simply-connected manifold and $H_*(M)$ be of finite type. Let $A$ be a differential graded algebra of $\Lambda M$ such that $A^0$ = $\mathbb{R}$ and $H^*(A)$ $\cong$ $H^*(\Lambda M)$ by the inclusion. Then there is an isomorphism of coalgebras
$$
 H^*(B(A)) \cong H^*(\Omega M)
$$
given by
$$
(\omega_1, \cdots, \omega_n) \mapsto \int \omega_1 \cdots \omega_n.
$$ 
\end{theorem}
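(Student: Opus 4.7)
The plan is to realize the iterated integral as a morphism of differential graded coalgebras from $B(A)$ into the de Rham complex of $\Omega M$, and then to show it is a quasi-isomorphism by comparing a word-length filtration on $B(A)$ with the Eilenberg--Moore spectral sequence of the path-loop fibration.

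First I would verify the chain-map property $d \circ \int = \int \circ d_B$ on $\Omega M$ (obtained from $LM$ by choosing a basepoint). This is a Stokes-theorem computation on $\Delta_k \times \Omega M$: commuting the exterior derivative past the pullback $\Phi_k^{\ast}$ and the fiber integration $p_{\ast}$ yields the internal terms $\sum \pm (\omega_1, \ldots, d\omega_i, \ldots, \omega_k)$, while the codimension-one faces $t_i = t_{i+1}$ of $\partial \Delta_k$ produce the wedge terms $\sum \pm (\omega_1, \ldots, \omega_i \wedge \omega_{i+1}, \ldots, \omega_k)$. The remaining end faces $t_1 = 0$ and $t_k = 1$ contribute nothing because evaluation at the basepoint is constant and the hypothesis $A^0 = \mathbb{R}$ kills the corresponding forms by a degree count; the sign conventions built into the definition of the iterated integral and $d_B$ are arranged precisely to match. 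For coalgebra compatibility I would use a Fubini argument: decomposing $\Delta_k$ along the hyperplane $t_i \leq s \leq t_{i+1}$ and reparametrizing each half to a product of simplices identifies the bar coproduct $(\omega_1,\ldots,\omega_k) \mapsto \sum (\omega_1,\ldots,\omega_i)\otimes(\omega_{i+1},\ldots,\omega_k)$ with the coproduct on $H^{\ast}(\Omega M)$ dual to the Pontryagin product coming from loop concatenation.

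The principal obstacle is the quasi-isomorphism. I would filter $B(A)$ by word length, $F_p B(A) = \bigoplus_{r \leq p}(sA)^{\otimes r}$; the internal differential on the associated graded is the $d$ on each tensor factor, so $E_1$ is generated by $H^{\ast}(A)^{\otimes \bullet} \cong H^{\ast}(M)^{\otimes \bullet}$ and the induced $d_1$ is the bar differential built from the cup product on $H^{\ast}(M)$, giving $E_2 = \operatorname{Tor}_{H^{\ast}(M)}(\mathbb{R}, \mathbb{R})$. On the geometric side, simple connectedness of $M$ together with finite type of $H_{\ast}(M)$ ensure that the Eilenberg--Moore spectral sequence of the fibration $\Omega M \to PM \to M$ converges strongly to $H^{\ast}(\Omega M)$ with the same $E_2$-page, since $PM$ is contractible. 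The iterated integral respects these filtrations because adding one more factor $\omega_i$ increases the simplex dimension by one, and on $E_2$ the induced morphism is the standard identification through the de Rham isomorphism $H^{\ast}(A) \cong H^{\ast}(M)$. The comparison theorem for spectral sequences then forces the iterated integral to be a quasi-isomorphism, and combined with the previously verified coalgebra compatibility this yields the claimed isomorphism of coalgebras.
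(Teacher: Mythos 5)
A preliminary remark: the paper itself gives no proof of this statement. It is Theorem 3.1, quoted from Chen \cite{chen77.2} and then used as an input later (it supplies the $E_2$-identification in the proof of Theorem 4.1), so your argument can only be compared with Chen's original proof, or with the treatment in \cite{gjp}, not with anything in the text. The first half of your sketch is fine and standard: the Stokes computation on $\Delta_k\times\Omega M$ giving $d\circ\int=\int\circ\, d_B$ (the end faces $t_1=0$, $t_k=1$ vanish on based loops because, with $A^0=\mathbb{R}$, the entries have positive degree and pull back to zero under the constant evaluation at the basepoint), and the compatibility of the bar coproduct with loop concatenation, which is exactly the identity recorded as Lemma 5.3(2) in the paper.

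The gap is in the second half, and it is precisely where the content of Chen's theorem lies. To invoke the comparison theorem it is not enough that the word-length spectral sequence of $B(A)$ and the Eilenberg--Moore spectral sequence of $\Omega M\to PM\to M$ have abstractly isomorphic $E_2$-pages $\mathrm{Tor}_{H^*(M)}(\mathbb{R},\mathbb{R})$: you must exhibit a filtration on the target complex (forms or cochains on $\Omega M$) whose spectral sequence \emph{is} the Eilenberg--Moore one, show that the iterated-integral map preserves the two filtrations, and identify the induced map on $E_1$ (or $E_2$) as an isomorphism. Your justification --- that adding a factor $\omega_i$ raises the dimension of the integration simplex by one --- concerns the simplex $\Delta_k$ in the domain of the fiber integration and has no a priori relation to the Eilenberg--Moore filtration, which arises from the geometric cobar resolution of the path-loop fibration (equivalently the cosimplicial space $n\mapsto M^n$ with totalization $\Omega M$), or dually from a geometric filtration of chains on $\Omega M$. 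The way to close the gap is to use that the length-$k$ iterated integral factors through the evaluation map $\Phi_k:\Delta_k\times\Omega M\to M^k$, i.e.\ through the $k$-th stage of that cosimplicial model, and then to compute the induced map of $E_1$-terms via the de Rham isomorphism $H^*(A)\cong H^*(M)$; alternatively one argues dually with a chain-level filtration, as the paper does for $LM$ in Theorem 4.1 (which, however, presupposes the present theorem) and in Section 5. You also implicitly need $H^1(A)=0$ (simple connectedness) so that $sH(A)$ sits in positive degrees and only finitely many word lengths contribute in each total degree, which is what makes the comparison of convergent spectral sequences legitimate; this should be said explicitly. As written, the decisive comparison step is asserted rather than proved.
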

Let $F^pB(A)$ be a filtration of $B(A)$ such that
$$
F^pB(A) = \oplus_{0 \leq r \leq p}\otimes^{r} sA.
$$
Let ${{\rm{Hom}}}(B(A), A^{\vee})_n$ = $\sum_{p+q=n}{{\rm{Hom}}}(B(A)_p, A^{q\vee})$ and 
${{\rm{Hom}}}(B(A), A^{\vee})$ = \\ $\sum_n{{\rm{Hom}}}(B(A), A^{\vee})_n$. Its boundary is defined by
\begin{eqnarray*}
 \lefteqn{\delta\varphi(\omega_{1},\cdots,\omega_{r})(\omega) }\\
 & = & \varphi(\omega_{1},\cdots,\omega_{r}) (d\omega) + (-1)^{|\omega|}\varphi(d_{B}(\omega_{1},\cdots,\omega_{r}))(\omega) \\
&& \mbox{}-(-1)^{|\omega|}\varphi(\omega_{2},\cdots,\omega_{r})(\omega \wedge \omega_{1}) \\
&& +(-1)^{|\omega |+\varepsilon_{r-1}(|\omega_r |+1)}\varphi (\omega_{1},\cdots ,\omega_{r-1})(\omega \wedge \omega_{r}). 
\end{eqnarray*}

Let us define the subcomplex of ${{\rm{Hom}}}(B(A), A^{\vee})$, $\overline{{\rm{Hom}}}(B(A), A^{\vee}$), according to the Chen's normalization of the cyclic bar complex (see \cite{chen77.1} or \cite{gjp}). We define $\overline{{\rm{Hom}}}(B(A), A^{\vee}$) to be the set of elements in ${{\rm{Hom}}}(B(A), A^{\vee})$ which satisfy the following equations for any $\omega, \omega_i$ $\in$ $A^{>0}$ and $f$ $\in$ $A^0$:
\begin{eqnarray*}
\left\{
\begin{array}{l}
-\varphi (\cdots \omega_{i-2}, f\omega_{i-1}, \omega_i, \cdots)( \omega ) + \varphi (\cdots, \omega_{i-1} ,f\omega_{i}, \omega_{i+1}, \cdots)( \omega ) \\
\hspace{3.16cm} + \varphi ( \cdots ,\omega_{i-1} ,df, \omega_{i}, \cdots )(\omega ) = 0, \ \ \mbox{$1\leq i\leq r-1$}, \\
- \varphi (\omega_{1},\cdots ,\omega_{r})(f\omega) + \varphi(f\omega_{1},\cdots ,\omega_{r})(\omega)+  \varphi(df,\omega_{1},\cdots ,\omega_{r})(\omega)=0, \\
- \varphi (\omega_1,\cdots,fw_r)(\omega) + \varphi(\omega_{1},\cdots ,\omega_{r})(f\omega)+ \varphi (\omega_{1},\cdots ,\omega_{r},df)(\omega)=0. 
\end{array}
\right.
\end{eqnarray*}
It can be easily seen that it is isomorphic to the dual of the normalized cyclic bar complex of $A$: 
$$
\overline{\rm{Hom}}(B(A), A^{\vee}) \cong C(A)^{\vee}
$$
Similarly, let ${{\rm{Hom}}}(B(A), A)_n$ = $\sum_{p-q=n}{{\rm{Hom}}}(B(A)_p, A^q)$ and ${{\rm{Hom}}}(B(A), A)$ = $\sum_n{{\rm{Hom}}}(B(A), A)_n$. Its boundary is defined by 
\begin{eqnarray*}
\lefteqn{\delta\varphi(\omega_1, \cdots, \omega_r) }\\
& = & (-1)^{|\varphi|-\varepsilon_r}d\varphi(\omega_1, \cdots, \omega_r) -(-1)^{|\varphi|-\varepsilon_r}\varphi(d_B(\omega_1,\cdots, \omega_r)) \\
&& + (-1)^{|\varphi|-\varepsilon_r}\omega_1\wedge\varphi(\omega_2,\cdots, \omega_r) \\ 
&& -(-1)^{(|\omega_r|+1)(|\varphi|+1)}\varphi(\omega_1\cdots,\omega_{r-1})\wedge\omega_r.
\end{eqnarray*}
We define $\overline{{\rm{Hom}}}(B(A), A)$ to be the set of elements in ${{\rm{Hom}}}(B(A), A)$ which satisfy the following equations for any $\omega, \omega_i$ $\in$ $A^{>0}$ and $f$ $\in$ $A^0$:
\begin{eqnarray*}
\left\{
\begin{array}{l}
-\varphi (\cdots \omega_{i-2}, f\omega_{i-1}, \omega_i, \cdots) + \varphi (\cdots, \omega_{i-1} ,f\omega_{i}, \omega_{i+1}, \cdots) \\
\hspace{2.53cm} + \varphi ( \cdots ,\omega_{i-1} ,df, \omega_{i}, \cdots ) = 0, \ \ \mbox{$1 \leq i \leq r-1$}, \\
- f\wedge\varphi (\omega_{1},\cdots ,\omega_{r})+ \varphi(f\omega_{1},\cdots ,\omega_{r}) +  \varphi(df,\omega_{1},\cdots ,\omega_{r}) =0, \\
- \varphi (\omega_1,\cdots,fw_r) + \varphi(\omega_{1},\cdots ,\omega_{r})\wedge f+ \varphi (\omega_{1},\cdots ,\omega_{r},df)=0. 
\end{array}
\right.
\end{eqnarray*}
The cup product on ${{\rm{Hom}}}(B(A),A)$ is defined by 
\begin{eqnarray*}
\lefteqn{\varphi_1\cup \varphi_2(\omega_1,\cdots, \omega_r) } \\
 & = & \sum_{0\leq i \leq r}(-1)^{|\varphi_1|(|\varphi_2|+\varepsilon_r-\varepsilon_i)}\varphi_1(\omega_1, \cdots, \omega_i)\wedge\varphi_2(\omega_{i+1},\cdots, \omega_r).
\end{eqnarray*}
Since $\delta(\varphi_1\cup\varphi_2) = \delta\varphi_1\cup \varphi_2 + (-1)^{|\varphi_1|}\varphi_1\cup\delta\varphi_2$, $H_*({{\rm{Hom}}}(B(A), A))$ becomes an algebra. This product can be induced on $H_*(\overline{{\rm{Hom}}}(B(A), A))$. 

The $E_1$-term of their spectral sequences associated with the filtration $F^pB(A)$ can be calculated from the cohomology of $A$.

\begin{proposition} There is an isomorphism of vector spaces
$$
H_*(\overline{{\rm{Hom}}}(F^pB(A)/F^{p-1}B(A), A^{\vee})) \cong {{\rm{Hom}}}(\otimes^p sH(A), H(A)^{\vee}) 
$$
\end{proposition}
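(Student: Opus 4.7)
The plan is to compute the cohomology of the graded piece by recognizing the induced differential as dual to a tensor product differential and then applying the K\"unneth theorem.

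First, I would identify the differential induced on the quotient. On $F^pB(A)/F^{p-1}B(A) = \otimes^p sA$ the wedge-product part of $d_B$ drops the tensor length and thus vanishes in the quotient, leaving only the internal differential $d_0$ that applies $d$ to each slot with the Koszul signs. Dually, in the formula for $\delta$ the last two terms, and the wedge part of $\varphi\circ d_B$, all evaluate $\varphi$ at bar degree $r-1$ and so vanish on ${\rm{Hom}}(\otimes^p sA, A^{\vee})$; only
\[
\bar\delta\varphi(\omega_1,\ldots,\omega_p)(\omega) = \varphi(\omega_1,\ldots,\omega_p)(d\omega) + (-1)^{|\omega|}\varphi\bigl(d_0(\omega_1,\ldots,\omega_p)\bigr)(\omega)
\]
will survive as the induced boundary.

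Second, I would identify $\bigl({\rm{Hom}}(\otimes^p sA, A^{\vee}), \bar\delta\bigr)$ with the dual of the tensor product complex $(\otimes^p sA)\otimes A$ endowed with the usual tensor-product differential, via the natural adjunction ${\rm{Hom}}(V, A^{\vee}) \cong (V\otimes A)^{\vee}$. The finite-type hypothesis on $H_*(M)$ passes to $H^*(A)$, so the K\"unneth theorem will give
\[
H^*\bigl((\otimes^p sA)\otimes A\bigr) \cong (\otimes^p sH(A))\otimes H(A),
\]
and dualizing produces ${\rm{Hom}}(\otimes^p sH(A), H(A)^{\vee})$.

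Third, I would check that the inclusion $\overline{\rm{Hom}} \hookrightarrow {\rm{Hom}}$ is a quasi-isomorphism on each associated graded piece. The three defining relations of $\overline{\rm{Hom}}$ couple bar degrees $p$ and $p+1$ through their $df$-terms; on the associated graded those $df$-terms lie in a different filtration layer and vanish, so the surviving relations merely express that $\varphi$ is insensitive to moving an $A^0$-factor across adjacent slots (including between the first or last slot and the $A^{\vee}$-slot). On the dual side these conditions cut out the standard normalized sub-quotient of $(\otimes^p sA)\otimes A$, which is quasi-isomorphic to the unnormalized tensor product by the usual Eilenberg--Zilber-type contraction on the $A^0$-degenerate subcomplex. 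The main obstacle will be precisely this third step: the normalization relations genuinely mix bar degrees $p$ and $p+1$, so one must carefully describe the image of $\overline{\rm{Hom}}$ in the associated graded and then exhibit an explicit chain contraction establishing acyclicity of the $A^0$-degenerate part under $d_0\otimes 1 + 1\otimes d$. Once this quasi-isomorphism is in hand, the K\"unneth calculation of the second step delivers the claimed identification.
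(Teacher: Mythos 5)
Your steps 1 and 2 are fine for the \emph{unnormalized} complex, but step 3 is where the argument breaks, and not for a repairable technical reason. First, you have the surviving relations backwards: a class in the associated graded piece of bar-length $p$ is represented by a normalized $\varphi$ vanishing on $F^{p-1}B(A)$, and in the defining relations taken at bar-length $p-1$ the first two terms die, leaving exactly the conditions $\varphi(\omega_1,\ldots,\omega_{i-1},df,\omega_i,\ldots,\omega_{p-1})(\omega)=0$ (and their end-slot variants). So it is the $df$-slot vanishing conditions, not merely the ``move an $f\in A^0$ across adjacent slots'' conditions, that persist on the associated graded --- and they are precisely the conditions that make the proposition true. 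Second, having dropped them, your claimed quasi-isomorphism $\overline{{\rm{Hom}}}\hookrightarrow{\rm{Hom}}$ on graded pieces is false in the only case that matters, $dA^0\neq 0$: by your own step 2 the unnormalized piece has cohomology ${\rm{Hom}}(\otimes^p H(sA),H(A)^{\vee})$, and the degree-zero part of $H(sA)$ is $\ker\bigl(d:A^1\to A^2\bigr)=Z^1(A)$, which strictly contains $H^1(A)$ whenever $dA^0\neq 0$ (e.g.\ $A=\Lambda M$). Hence the unnormalized graded piece is strictly larger than the right-hand side of the proposition, and no Eilenberg--Zilber-type contraction of an ``$A^0$-degenerate'' subcomplex can exist: Chen's normalization is not the simplicial normalization, and its entire purpose is to cut $Z^1(A)$ down to $H^1(A)$ and to neutralize $A^0\neq\mathbb{R}$.

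The paper's proof proceeds differently and sidesteps this. It chooses a sub-DGA $\overline{A}\subset A$ with $\overline{A}^0=\mathbb{R}$, $\overline{A}^q=A^q$ for $q>1$ and $A^1=dA^0\oplus\overline{A}^1$, and uses the normalization relations to show that restriction to $\overline{A}$-entries gives an isomorphism of complexes $\overline{{\rm{Hom}}}(F^pB(A)/F^{p-1}B(A),A^{\vee})\cong{\rm{Hom}}(F^pB(\overline{A})/F^{p-1}B(\overline{A}),\overline{A}^{\vee})$. At that point your adjunction-plus-K\"unneth computation applies verbatim to $\overline{A}$: since $\overline{A}^0=\mathbb{R}$ and $A^1=dA^0\oplus\overline{A}^1$, one has $Z^1(\overline{A})=H^1(\overline{A})\cong H^1(A)$, so $H(s\overline{A})\cong sH(\overline{A})\cong sH(A)$ and the stated answer drops out. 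If you insist on working with $A$ itself, you must compute the cohomology of the normalized graded piece directly, keeping both families of surviving conditions, rather than comparing it to the unnormalized complex.
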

\begin{proof} 
Let $\overline{A}$ be a differential graded subalgebra of $A$ such that $\overline{A}^p$ = $A^p$ for $p$ $>$ 1, $\overline{A}^0$ = $\mathbb{R}$ and 
$$
A^1 = dA^0 \oplus \overline{A}^1.
$$
There is an isomorphism of vector spaces
$$ 
\overline{{\rm{Hom}}}(F^qB(A)/F^{q-1}B(A), A^{\vee}) \cong {{\rm{Hom}}}(F^qB(\overline{A})/F^{q-1}B(\overline{A}), \overline{A}^{\vee}). \\
$$
Since $\overline{A}^0$ = $\mathbb{R}$, there is an isomorphism
$$
H_0({{\rm{Hom}}}(F^qB(\overline{A})/F^{q-1}B(\overline{A}), \overline{A}^{\vee})) \cong {{\rm{Hom}}}(\otimes sH(\overline{A}), H(\overline{A})^{\vee}).
$$
Therefore we obtain the proposition.
\end{proof}
\section{Proof of Theorem 1.1}
We give the proof of theorem 1.1 in this section. There is a differential graded subalgebra of $A$, $\overline{A}$, such that $\overline{A}^0$ = $\mathbb{R}$ and $H(A)$ $\cong$ $H(\overline{A})$ by the inclusion. Then we obtain the isomorphism of algebras
$$
H_*(\overline{{\rm{Hom}}}(B(A), A)) \cong H_*({{\rm{Hom}}}(B(\overline{A}), \overline{A}))
$$ 
by proposition 3.2. Therefore it suffices to verify the theorem in the case $A^0 = \mathbb{R}$. The following result is due to Chen. 
\begin{theorem}[Chen \cite{chen77.2}] $ H_*(LM) \cong H_*({{\rm{\overline{Hom}}}}(B(A), A^{\vee})). $
\end{theorem}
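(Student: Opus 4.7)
The plan is to identify $\overline{\rm{Hom}}(B(A), A^\vee)$ with the linear dual of the normalized cyclic bar complex $C(A)$, to compare $C(A)$ with $C(\Lambda M)$, and finally to invoke the iterated integral quasi-isomorphism $H^*(LM)\cong H^*(C(\Lambda M))$ stated in the introduction.

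First, I would use the isomorphism $\overline{\rm{Hom}}(B(A), A^\vee)\cong C(A)^\vee$ noted immediately after the definition of $\overline{\rm{Hom}}(B(A), A^\vee)$ in Section 3. This reduces the claim to $H_*(LM)\cong H_*(C(A)^\vee)$. Under the finite type hypothesis on $H_*(M)$, the relevant graded components of $C(A)$ are finite-dimensional in each total degree (after passing to the reduced model $\overline A$ with $\overline A^0=\mathbb R$, so that $s\overline A$ sits in strictly positive degree), and hence taking duals commutes with taking homology: $H_*(C(A)^\vee)\cong H^*(C(A))^\vee$.

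Second, I would compare $C(A)$ with $C(\Lambda M)$ by means of the bar filtration $F^pC$. The inclusion $A\hookrightarrow\Lambda M$ is a quasi-isomorphism of differential graded algebras, so the argument of Proposition~3.2, adapted from $\overline{\rm{Hom}}(B(\cdot),(\cdot)^\vee)$ to $C(\cdot)$, identifies the $E_1$ pages of both filtration spectral sequences with tensor powers of $sH(A)\cong sH(\Lambda M)$. The reduction to $\overline A$ guarantees that the bar filtration is bounded in each total degree, so the spectral sequences converge; the comparison theorem then yields $H^*(C(A))\cong H^*(C(\Lambda M))$.

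Third, I would invoke Chen's theorem, quoted in the introduction, to obtain $H^*(C(\Lambda M))\cong H^*(LM)$ via iterated integrals, which uses the simple-connectivity of $M$. Combining and dualizing once more gives
$$
H_*(LM)\cong H^*(LM)^\vee\cong H^*(C(\Lambda M))^\vee\cong H^*(C(A))^\vee\cong H_*(\overline{\rm{Hom}}(B(A), A^\vee)).
$$
The main obstacle is the iterated integral quasi-isomorphism $\int:C(\Lambda M)\to\Lambda^*LM$ itself, whose proof matches the bar filtration of $C(\Lambda M)$ with the Eilenberg--Moore spectral sequence of the path-loop fibration $\Omega M\to PM\to M$ and crucially requires simple-connectivity; this is the substantive input of \cite{chen77.2}. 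Relative to that, the remaining work is bookkeeping: verifying the identification with $C(A)^\vee$ with the correct signs in the boundary $\delta$, and confirming that finite type together with the bounded bar filtration legitimately lets one interchange $H^*$ and $H_*$ under dualization.
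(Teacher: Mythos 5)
Your proof is essentially correct, but it takes a genuinely different route from the paper. You use the free-loop-space theorem $H^*(LM)\cong H^*(C(\Lambda M))$ quoted in the introduction as the main black box, and reduce the statement to it via the identification $\overline{\mathrm{Hom}}(B(A),A^{\vee})\cong C(A)^{\vee}$, a bar-length filtration comparison showing $H^*(C(A))\cong H^*(C(\Lambda M))$ (the same kind of argument as Proposition 3.2 and Lemma 6.8), and dualization over $\mathbb{R}$. The paper instead proves the statement directly: it defines the explicit pairing $\psi(\sigma)(\omega_1,\cdots,\omega_n)(\omega)=\int_\sigma\pi^*\omega\wedge\int\omega_1\cdots\omega_n$, filters $C_*(LM)$ by the image of simplices in the base of the fibration $\Omega M\to LM\to M$ and filters $\overline{\mathrm{Hom}}(B(A),A^{\vee})$ by the degree of the test form $\omega$, and compares the resulting spectral sequences; the only substantive input is Theorem 3.1 (Chen's based-loop theorem $H^*(B(A))\cong H^*(\Omega M)$) together with de Rham theory on $M$. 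The paper's route buys two things: it does not presuppose the free-loop-space theorem, which is in substance the statement being proved, so it is more self-contained; and it produces the specific filtered map $\psi$ whose $E_2$-level description is exactly what is reused later (with Lemma 4.2 and the Cohen--Jones--Yan theorem) to identify the cup product with the loop product, whereas your argument gives the vector-space isomorphism but not that particular map. Two small inaccuracies in your write-up, neither fatal: the chain-level components of $C(\overline{A})$ are not finite dimensional (spaces of differential forms are infinite dimensional), but this is not needed, since over a field dualization is exact and $H_*(C(A)^{\vee})\cong H^*(C(A))^{\vee}$ holds unconditionally; finite type enters only to identify $H_*(LM)$ with $H^*(LM)^{\vee}$, which holds because $LM$ has finite-type homology when $M$ is simply connected of finite type. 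Also the $E_1$-term of the bar-length filtration of $C(A)$ is $H(A)\otimes(\otimes^p sH(A))$ rather than $\otimes^p sH(A)$ alone; with the reduction to $\overline{A}$ the comparison goes through as you intend.
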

\begin{proof} 
We define $\psi : C_*(LM) \rightarrow {{\rm{\overline{Hom}}}}(B(A), A^{\vee})$ by
$$
\psi(\sigma)(\omega_1, \cdots, \omega_n)(\omega) = \int_\sigma \pi^*\omega\wedge\int\omega_1\cdots\omega_n.
$$
Let $F_pC_*(LM)$ be a filtration of $C_*(LM)$ such that
\begin{eqnarray*}
F_pC_r(LM) = \mbox{ \{ $\sigma$ : $\Delta^r$ $\rightarrow$ $LM$ $|$ $\pi$ $\circ$ $\sigma$ = $\sigma'$ $\circ$ $\pi'$ for some $\sigma'$ $\in$ $C_q(M)$, } \\
\mbox{ $q$ $\leq$ $p$, $\pi'$ : $\Delta^r$ $\rightarrow$ $\Delta^q$ \} }. 
\end{eqnarray*}
Let $\{ E^r_{p,q} \}$ be the associated spectral sequence. 
Define a filtration of ${{\rm{\overline{Hom}}}}(B(A), A^{\vee})$ by
$$
F_p{{\rm{\overline{Hom}}}}(B(A), A) = \{ f\in {{\rm{\overline{Hom}}}}(B(A), A^{\vee}) \ | \ f(\omega_1, \cdots, \omega_n)(\omega) = 0, \ \forall \omega \in A^{\geq p+1} \}.
$$
It can be easily shown that $\psi$ preserves the filtrations of $C_*(LM)$ and ${{\rm{\overline{Hom}}}}(B(A), A^{\vee})$. On $E_2$-level, the map
$$
\psi : H_p(M) \otimes H_q(\Omega M) \rightarrow H_p(A^{\vee}) \otimes H_q(B(A)^{\vee})
$$
is given by
$$
\sigma_1\otimes\sigma_2 \longmapsto \Bigr(\omega \mapsto \int_{\sigma_1}\omega \Bigl) \otimes \Bigr( (\omega_1, \cdots, \omega_n \mapsto \int_{\sigma_2}\int\omega_1\cdots\omega_n)\Bigl).
$$
Theorem 3.1 asserts that this is an isomorphism. Therefore we obtain the theorem. 
\end{proof}
\begin{lemma} $H_*(\overline{{\rm{Hom}}}(B(A), A)) \cong H_{*-d}(\overline{{\rm{Hom}}}(B(A), A^{\vee})).$ 
\end{lemma}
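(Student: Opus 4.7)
The plan is to construct a Poincar\'e-duality chain map and then verify it is a quasi-isomorphism via a spectral sequence comparison. Define
$$
\Phi : \overline{\rm{Hom}}(B(A), A) \to \overline{\rm{Hom}}(B(A), A^{\vee})
$$
by
$$
\Phi(\varphi)(\omega_1, \ldots, \omega_r)(\omega) = \int_M \varphi(\omega_1, \ldots, \omega_r)\wedge\omega.
$$
If $\varphi$ takes values in $A^q$, the integral forces $\omega \in A^{d-q}$, so $\Phi(\varphi)$ lands in $A^{(d-q)\vee}$. Thus $\Phi$ shifts total degree by $d$, since $(p-q)+d = p+(d-q)$, which is the shift recorded in the statement of the lemma.

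Next I would check that $\Phi$ is a chain map. The four summands of the two $\delta$-formulas correspond pairwise: Stokes' theorem applied to the closed manifold $M$ transfers $d$ between $\varphi$ and $\omega$, matching the $d\varphi$ and $d\omega$ terms; graded commutativity of $\wedge$ rewrites $\int_M (\omega_1 \wedge \varphi(\cdots)) \wedge \omega$ as $\pm\int_M \varphi(\cdots)\wedge(\omega \wedge \omega_1)$, matching the $\omega_1$-term on the other side, and analogously for the $\omega_r$-term. The signs in the given boundary formulas are arranged precisely for this correspondence to hold. It also has to be verified that $\Phi$ carries $\overline{\rm{Hom}}$ into $\overline{\rm{Hom}}$, but each of the three normalization identities pushes through $\int_M (\cdot) \wedge \omega$ to the corresponding identity on the target, using that $f \in A^0$ commutes with every form under $\wedge$.

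Finally, to show $\Phi$ is a quasi-isomorphism I would filter both complexes by the word-length filtration $F^p B(A)$ and compare the associated spectral sequences. The argument of Proposition~3.2 applies verbatim with $A$-valued coefficients in place of $A^{\vee}$-valued ones (after first reducing to $\overline{A}^0 = \mathbb{R}$), producing $E_1$-terms $\mathrm{Hom}(\otimes^p sH(A), H(A))$ on the $A$-side and $\mathrm{Hom}(\otimes^p sH(A), H(A)^{\vee})$ on the $A^{\vee}$-side. The map induced by $\Phi$ on $E_1$ is postcomposition with the Poincar\'e-duality isomorphism $H^*(A) \cong H^*(M) \to H^{d-*}(M)^{\vee}$, which is a genuine isomorphism because $M$ is compact closed oriented of dimension $d$ and $H_*(M)$ is of finite type; the comparison theorem for spectral sequences then yields the lemma. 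The main obstacle is the sign bookkeeping in the chain-map check; the $A$-valued analogue of Proposition~3.2 is only a secondary technicality requiring a re-run of the normalization reduction.
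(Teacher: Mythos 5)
Your proof is essentially correct, and your duality map is the same as the paper's (the paper takes $P(\varphi)(\omega_1,\ldots,\omega_n)(\omega)=\int_M\omega\wedge\varphi(\omega_1,\ldots,\omega_n)$, which differs from yours only by a Koszul sign), but you run the comparison through a genuinely different filtration. The paper filters $\overline{\rm{Hom}}(B(A),A)$ by the de Rham degree of the value, $F_p=\{\varphi \mid \varphi(\omega_1,\ldots,\omega_n)\in A^{\geq d-p}\}$, matched against the filtration of $\overline{\rm{Hom}}(B(A),A^{\vee})$ already used in Theorem 4.1; on $E_2$ the map becomes $H^{d-p}(A)\otimes H_q(B(A)^{\vee})\to H_p(A^{\vee})\otimes H_q(B(A)^{\vee})$, i.e.\ Poincar\'e duality tensored with the identity. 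This choice has two payoffs: the filtration has length $d+1$, so convergence and the spectral-sequence comparison are immediate, and it is the same (Serre-type) filtration that is later compared with the Cohen--Jones--Yan spectral sequence, which streamlines the multiplicative part of Theorem 1.1 (though the lemma itself only asserts an isomorphism of vector spaces). You instead filter by bar word length $F^pB(A)$, obtaining an Eilenberg--Moore-type $E_1$-term ${\rm{Hom}}(\otimes^p sH(A),H(A))$ versus ${\rm{Hom}}(\otimes^p sH(A),H(A)^{\vee})$, with the induced map being postcomposition with Poincar\'e duality; this also works, and the $A$-valued analogue of Proposition 3.2 goes through as you say. The one point you should make explicit is convergence: your filtration on the Hom-complex is unbounded, so the final appeal to ``the comparison theorem for spectral sequences'' needs either the completeness of the word-length filtration together with an Eilenberg--Moore-type comparison theorem, or the standing hypotheses that $M$ is simply connected and of finite type (after reducing to $\overline{A}^0=\mathbb{R}$), which force $sH(A)$ to be concentrated in positive degrees so that only finitely many word lengths contribute in each total degree; at that step you invoke only compactness, orientation and finite type, which justify Poincar\'e duality but not convergence. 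With that caveat recorded, your argument is a valid alternative to the paper's bounded-filtration proof.
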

\begin{proof}
We define a chain map $P$ : $\overline{{\rm{Hom}}}(B(A), A) \rightarrow \overline{{\rm{Hom}}}(B(A), A^{\vee})$ by
$$
P(\varphi)(\omega_1,\cdots, \omega_n)(\omega) = \int_M \omega\wedge\varphi(\omega_1,\cdots, \omega_n).
$$
Define a filtration of $\overline{{\rm{Hom}}}(B(A), A)$ by
$$
F_p\overline{{\rm{Hom}}}(B(A), A) = \{ \varphi \in \overline{{\rm{Hom}}}(B(A), A) \ | \ \varphi(\omega_1, \cdots, \omega_n) \in A^{\geq d-p} \}.
$$
The map $P$ preserves those filtrations. On $E_2$-level, the map
$$
P : H^{d-p}(A) \otimes H_q(B(A)^{\vee}) \rightarrow H_p(A^{\vee}) \otimes H_q(B(A)^{\vee})
$$
is given by
$$
\omega\otimes\varphi \longmapsto \Bigr( \tau \mapsto \int_M \omega\wedge\tau \Bigl) \otimes \varphi.
$$ 
This is isomorphic and we obtain the lemma.
\end{proof}
\begin{proof}[\it Proof of theorem 1.1] \: We can verify that $\mathbb{H}_*(LM)$ is isomorphic to \\
$H_*(\overline{{\rm{Hom}}}(B(A), A))$ as vector spaces by composing the maps in theorem 4.1 and lemma 4.2. We can also verify that there is an isomorphism of associative, commutative algebras. Indeed, the cup product of $\overline{{\rm{Hom}}}(B(A), A)$ on $E_2$-level
$$
H^{d-p}(A) \otimes H_q(B(A)^{\vee}) \otimes H^{d-s}(A) \otimes H_t(B(A)^{\vee}) \rightarrow H^{2d-p-s}(A) \otimes H_{q+t}(B(A)^{\vee})
$$
is given by
$$
a\otimes g \otimes b \otimes h \mapsto (-1)^{(d-p+q)(d-s)} a \wedge b \otimes g\cdot h,
$$
where $g\cdot h$ satisfies
$$
g\cdot h(\omega_1, \cdots, \omega_n) = \sum_i g(\omega_1, \cdots, \omega_i)h(\omega_{i+1}, \cdots, \omega_n). 
$$
Then the following theorem asserts that the loop product and the cup product coincide on $E_2$-level.
\begin{theorem}[Cohen-Jones-Yan \cite{cjy}] Let $M$ be a simply-connected manifold. Then $\{ E^r_{p,q} \}$ becomes an algebra and converges to $H_*(LM)$ as algebras. On $E_2$-level, the product
$$
\mu : H_p(M ; H_q(LM)) \otimes H_s(M ; H_t(LM)) \rightarrow H_{p+q-d}(M ; H_{s+t}(LM))
$$ 
is given by 
$$
\mu((a\otimes g)\otimes(b\otimes h)) = (-1)^{(d-s)(p+q-d)}(a\cdot b)\otimes(gh)
$$
where $a\in H_p(M), b\in H_s(M), g\in H_q(\Omega M), h\in H_t(\Omega M)$, $a\cdot b$ is the intersection product and $gh$ is the Pontryagin product. 
\end{theorem}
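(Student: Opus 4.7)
The plan is to identify $\{E^r_{p,q}\}$ with the Serre spectral sequence of the evaluation fibration $\Omega M \to LM \xrightarrow{\pi} M$, and then describe the Chas--Sullivan loop product on the $E^2$-page by factoring it through its base and fibre contributions. The simplices in $F_pC_r(LM)$ are by definition those whose composition with $\pi$ factors through a $q$-simplex of $M$ with $q\leq p$, so the identification is standard and yields $E^2_{p,q}\cong H_p(M;H_q(\Omega M))$.

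Next, I would recall that the loop product is realised by the correspondence
$$
LM \times LM \ \xleftarrow{\ j\ }\ LM \times_M LM \ \xrightarrow{\ \gamma\ }\ LM,
$$
in which $j$ is the codimension-$d$ embedding of pairs of composable loops, obtained as the pullback of the diagonal $M\to M\times M$ along $\pi\times\pi$, and $\gamma$ is loop concatenation. The loop product is $\gamma_*\circ j_!$, where $j_!$ is the Thom--Pontryagin umkehr. The key technical point is to construct a tubular neighbourhood of $j$ whose projection to $M\times M$ is compatible with a tubular neighbourhood of the diagonal; this makes both $j_!$ and $\gamma_*$ into maps of filtered complexes for the evaluation filtrations on $LM\times LM$, $LM\times_M LM$ and $LM$, hence induces a map of spectral sequences. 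The multiplicativity statement and the convergence as algebras follow from this filtered compatibility at every page.

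On the $E^2$-level, $j_!$ splits as the intersection product $H_*(M)\otimes H_*(M)\to H_{*-d}(M)$ on the base tensored with the identity on the fibre $\Omega M \times \Omega M$, because over each point of $M$ its vertical part is trivial. Similarly, $\gamma_*$ is the identity on $M$ tensored with the Pontryagin product on $\Omega M$. Tensoring these two operations yields $(a\cdot b)\otimes(gh)$ up to sign, and the sign $(-1)^{(d-s)(p+q-d)}$ is dictated by the Koszul rule when graded maps act on external tensor products, once the degree shift of $-d$ inherent in $j_!$ is taken into account.

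The main obstacle, exactly as in the original Cohen--Jones--Yan argument, is the geometric construction of a filtration-respecting umkehr $j_!$ and the verification that concatenation $\gamma$ induces the expected maps on the associated graded pieces; once this input is in place, the $E^2$-formula reduces to the standard compatibility of the Serre spectral sequence with the product structure.
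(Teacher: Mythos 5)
The statement you are proving is not proved in the paper at all: it is quoted as an external input from Cohen--Jones--Yan \cite{cjy} and used as a black box in the proof of Theorem 1.1, so there is no internal argument to compare yours against. That said, your outline is a faithful reconstruction of the strategy of the original \cite{cjy} proof: filter $LM$ by the evaluation map so that $E^2_{p,q}\cong H_p(M;H_q(\Omega M))$ (simple connectivity giving untwisted coefficients), realise the Chas--Sullivan product as $\gamma_*\circ j_!$ through the correspondence $LM\times LM\leftarrow LM\times_M LM\rightarrow LM$ with $j$ pulled back from the diagonal of $M$, show this composite is filtration-shifting by $-d$ in the base degree, and read off (intersection)$\otimes$(Pontryagin) on the $E^2$-page.

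As a proof, however, what you have written has a genuine gap, and it is the one you yourself flag: the entire content of the theorem is the construction of a filtration-compatible umkehr $j_!$ and the chain-level verification that the loop product carries $F_p\otimes F_s$ into $F_{p+s-d}$, and this is asserted rather than carried out. You would need to say how the tubular neighbourhood of the codimension-$d$ embedding of infinite-dimensional spaces is produced (it is pulled back from a tubular neighbourhood of $\Delta\colon M\to M\times M$), how the Thom collapse is made to interact with the simplicial filtration $F_pC_*(LM)$ used in the paper (in \cite{cjy} this requires choosing a cell or handle structure on $M$ and arranging transversality of chains with respect to the diagonal), and why the filtration degree drops by exactly $d$, since that degree count is what produces the shifted bigrading and the convergence ``as algebras.'' Similarly, the claim that on associated graded objects $j_!$ is (intersection)$\otimes$(identity) and $\gamma_*$ is (identity)$\otimes$(Pontryagin) is precisely the identification that must be proved, not a formal consequence of ``vertical triviality''; and the sign $(-1)^{(d-s)(p+q-d)}$ is not forced by the Koszul rule alone but depends on orientation and Thom-class conventions, which you should fix and track. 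Given that the paper treats this result as a citation, either cite \cite{cjy} as the paper does or commit to carrying out these constructions in full.
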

Therefore we obtain the theorem.  
\end{proof}

\section{The conjugacy classes of fundamental groups}

Let $\pi$ denote a fundamental group of a smooth manifold $M$ and $J$ denote an augmentation ideal of the group ring of $\pi$, $\mathbb{R}\pi$. Chen showed that the completion of the fundamental group with respect to the powers of its augmentation ideal is isomorphic to the dual of the 0-th cohomology of the bar complex of differential forms via iterated integrals \cite{chen75}:
$$
\varprojlim_p\mathbb{R}\pi/J^p \cong H^0(B(A))^{\vee}
$$
where $A$ is a differential graded subalgebra of $\Lambda M$ such that $A^0 = \mathbb{R}$ and $H^*(A) \cong H^*(M)$. 

Based on this work,  we study iterated integrals on the free loop space of the non-simply-connected manifold. Let $\tilde{\pi}$ denote the set of conjugacy classes of $\pi$ and $\tilde{J^p}$ denote pr($J^p$) where pr is the projection of $\mathbb{R}\pi$ onto $\mathbb{R}\tilde{\pi}$. 

\begin{theorem} 
Let $M$ be a smooth manifold and $H_*(M)$ is of finite type.  Let $A$ be a differential graded subalgebra of $\Lambda M$ such that the map $H^q(A) \rightarrow H^q(\Lambda M)$ induced by the inclusion is isomorphic if $q$ = 0, 1 and injective if $q$ = 2. Then there is an isomorphism of  vector spaces
$$
\varprojlim_p\mathbb{R}\tilde{\pi}/\tilde{J^p} \cong H_0(\overline{{\rm{Hom}}}(B(A), A^{\vee}).
$$
\end{theorem}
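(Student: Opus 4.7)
My plan is to imitate Chen's proof of the based-loop identity $\varprojlim_p \mathbb{R}\pi/J^p \cong H^0(B(A))^\vee$ in the cyclic / free-loop setting, using the tools already developed in Sections 3--4. As in the beginning of Section 4, I would first replace $A$ by a differential graded subalgebra $\overline{A}\subset A$ with $\overline{A}^0=\mathbb{R}$ inducing an isomorphism on cohomology, using (the degree-$0$ case of) Proposition 3.2; this makes the bar-length filtration on $B(A)$ well behaved and lets us assume $A^0=\mathbb{R}$.

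The central construction is a map
$$
\Psi : \mathbb{R}\tilde{\pi} \longrightarrow H_0\bigl(\overline{\rm{Hom}}(B(A), A^\vee)\bigr),\qquad [\gamma]\longmapsto [\varphi_\gamma],
$$
where
$$
\varphi_\gamma(\omega_1,\ldots,\omega_n)(\omega)\;=\;\omega(\gamma(0))\int_\gamma \omega_1\cdots\omega_n .
$$
This is exactly the restriction of the chain map $\psi$ of Theorem~4.1 to $0$-chains in $LM$, so $\varphi_\gamma$ is automatically a cycle in $\overline{\rm{Hom}}(B(A),A^\vee)_0$. Since conjugate based loops are freely homotopic, the difference $\varphi_\gamma-\varphi_{h\gamma h^{-1}}=\psi(\partial\Sigma)=\delta\psi(\Sigma)$ for a $1$-chain $\Sigma$ realising the free homotopy, so $\Psi$ descends from $\mathbb{R}\pi$ to $\mathbb{R}\tilde{\pi}$.

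Next I would verify that $\Psi$ respects filtrations: filter the left-hand side by $\tilde{J^p}$ and the right-hand side by the sub-$\overline{\rm{Hom}}$ of functionals supported in $F^{p-1}B(A)$. Chen's iterated-integral length lemma states that $\int \omega_1\cdots\omega_n$ annihilates $J^{n+1}$, so $\Psi$ passes to the quotient and, on taking the inverse limit, induces
$$
\hat{\Psi} : \varprojlim_p \mathbb{R}\tilde{\pi}/\tilde{J^p} \longrightarrow H_0\bigl(\overline{\rm{Hom}}(B(A), A^\vee)\bigr).
$$
To show $\hat{\Psi}$ is an isomorphism I would compare associated gradeds. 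On the right, Proposition 3.2 together with the assumption on $H^{\leq 2}(A)\to H^{\leq 2}(\Lambda M)$ identifies the $p$-th graded piece in degree $0$ with the tensor powers of $H^1(M)$ modulo the relations dual to the cup product $H^1(M)^{\otimes 2}\to H^2(M)$, cyclically symmetrised by the normalisation conditions. On the left, Chen's original theorem identifies $\mathrm{gr}\bigl(\varprojlim\mathbb{R}\pi/J^p\bigr)$ with the same tensor data, and the projection $\mathbb{R}\pi\to\mathbb{R}\tilde{\pi}$ is exactly cyclic symmetrisation. A term-by-term comparison should identify $\mathrm{gr}(\hat{\Psi})$ with this canonical identification, and a five-lemma argument in the inverse limit then promotes this to an isomorphism of completions.

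The main obstacle is the cyclic bookkeeping: one must show that the normalisation relations cutting $\overline{\rm{Hom}}(B(A),A^\vee)$ out of $\rm{Hom}(B(A),A^\vee)$ correspond precisely, after passing to $\mathrm{gr}\,H_0$, to cyclic symmetrisation of the tensor algebra on $H^1(M)$, i.e. to the dual of $g\sim hgh^{-1}$. I expect this to reduce to a single Stokes calculation for the iterated integral on a cylinder realising the free homotopy between $\eta\gamma$ and $\gamma\eta$, combined with the formula for $\delta$, but the interaction of the $A^0$-action implicit in the normalisation with the augmentation filtration is the delicate technical point.
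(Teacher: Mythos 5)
Your construction of the map itself (iterated integrals on $0$-chains of $LM$, descent to conjugacy classes via free homotopies, filtration compatibility from Chen's vanishing of length $\le n$ integrals on $J^{n+1}$) is sound and agrees with the restriction of $\psi$ in Theorem 4.1; the paper's Proposition 5.2 is essentially this step. The genuine gap is in the second half, where all the content of the theorem lies. First, Proposition 3.2 identifies only the $E_1$-page of the spectral sequence of $\overline{{\rm{Hom}}}(B(A),A^{\vee})$: in degree $0$ the $p$-th graded piece is the full ${\rm{Hom}}(\otimes^p H^1(A),\mathbb{R})$, with no cup-product relations and no cyclic identification. Those appear only after running the differentials, and controlling them is exactly where the hypotheses (isomorphism on $H^{0},H^{1}$, injectivity on $H^{2}$) must be used --- your proposal never uses the $H^2$-injectivity at all. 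The paper gets this control not by computing $E_\infty$ on the algebraic side, but by a Zeeman-type comparison: it filters the free-loop chain complex $\overline{C}_*(LM)$ by fiberwise powers of augmentation ideals, proves the comparison map is an isomorphism in degree $0$ (Lemma 5.5) and surjective in degree $1$ (Lemma 5.6) on $E_1$, and deduces an isomorphism in degree $0$ on every page; the conjugation-invariance you want to impose algebraically is realized there geometrically, as case 3 of the map $\Phi$ and Lemma 5.4, which exhibit $\gamma^{-1}(\sigma_1-x)\gamma\cdots\gamma^{-1}(\sigma_p-x)\gamma-(\sigma_1-x)\cdots(\sigma_p-x)$ as a boundary modulo $F_{p+1}$.

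Second, your proposed mechanism for the cyclic structure is misplaced: the normalization conditions defining $\overline{{\rm{Hom}}}$ concern degree-$0$ forms and $df$ (Chen's normalization), while the cyclic symmetry of $\overline{{\rm{Hom}}}(B(A),A^{\vee})\cong C(A)^{\vee}$ comes from the $A^{\vee}$-slot and the wrap-around terms $\varphi(\omega_2,\ldots,\omega_r)(\omega\wedge\omega_1)$ and $\varphi(\omega_1,\ldots,\omega_{r-1})(\omega\wedge\omega_r)$ in $\delta$. The assertion that $\mathrm{gr}\,H_0$ of this cyclic complex equals the cyclic coinvariants of Chen's based-loop answer, and that $\tilde{J}^p/\tilde{J}^{p+1}$ is the corresponding coinvariant quotient of $J^p/J^{p+1}$, is essentially the statement being proved; leaving it to ``a term-by-term comparison should identify'' and ``a single Stokes calculation'' is where the argument is missing, not merely delicate. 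Finally, the five-lemma-in-the-limit step needs $H_0(\overline{{\rm{Hom}}}(B(A),A^{\vee}))\cong\varprojlim_p H_0(\overline{{\rm{Hom}}}(F^pB(A),A^{\vee}))$ (a completeness/$\varprojlim^1$ point); the paper avoids it by proving the level-wise identification $\mathbb{R}\tilde{\pi}/\tilde{J^p}\cong H_0(\overline{{\rm{Hom}}}(F^pB(A),A^{\vee}))$ and only then passing to the limit. To repair your route you would need to prove the two graded identifications (with the $H^2$ hypothesis doing real work) rather than cite them, at which point you would in effect be reconstructing the paper's Lemmas 5.4--5.6.
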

We give the proof of this theorem in this section. Let $*$ be a fixed point in $S^1$. In this section,  let $LM$ be a set of smooth maps from  $S^1$ to $M$ which are constant maps near $*$. Let $\Omega_xM$ be a subspace of $LM$ whose elements send $*$ to $x$ $\in$ $M$. Let Diff$(S^1,*)$ denote diffeomorphisms of $S^1$ which coincide with identity map near $*$. We define $\alpha$, $\beta$ : $\Delta^q$ $\rightarrow$ $LM$ to be {\it equivalent by a reparameterization} iff there is a smooth map $\tau$ : $\Delta^q$ $\rightarrow$ Diff($S^1, *$) such that
$$
\beta (\xi)(t) = \alpha (\xi)(\tau(t,\xi)), \ \ \ \ \forall (t, \xi) \in S^1 \times \Delta^q.
$$ 

Let $\overline{C}_*(LM)$ be a chain complex having as a basis the totality of equivalence classes of smooth simplexes of $LM$. Let $\overline{C}_*(\Omega_xM)$ be a chain complex having as a basis the totality of equivalence classes of smooth simplexes of $\Omega_xM$. 
$\overline{C_*}(\Omega_{x}M)$ becomes a noncommutative associative algebra as follows. The product of  $\sigma_1$ and $\sigma_2$ in $\overline{C_*}(\Omega_{x}M)$ is defined to be the path product or 0 according as  deg$\sigma_1$+deg$\sigma_2$ $\leq$ 1 or $>$ 1.  
The augmentation $\varepsilon$ : $\overline{C_*}(\Omega_xM)$ $\rightarrow$ $\mathbb{R}$ is given by $\varepsilon\sigma$ = 1 or 0 according as deg$\sigma$ = 0 or $>$ 0. 

Let $\sigma$ be a smooth simplex of $M$. Define for each $\sigma$
$$
\overline{C_q}(LM)(\sigma)  = \{ \sum n_i\tau_i \in \overline{C_q}(LM) \ | \ \pi_{\sharp}\tau_i=\sigma \}.
$$
$\overline{C_q}(LM)(\sigma)$ becomes a noncommutative associative algebra.  
Let $\varepsilon(\sigma)$ denote the augmentation of $\overline{C_q}(LM)(\sigma)$, given by  $\sum{n_i\tau_i}$ $\mapsto$ $\sum{n_i}$.
Define a filtration of $\overline{C_q}(LM)(\sigma)$ by 
$$ 
 F_p\overline{C_q}(LM) = ({\rm{ker}}\varepsilon)^p \oplus (\oplus_{\sigma : \Delta^q \to M} ({\rm{ker}}\varepsilon(\sigma))^p). 
$$ 
\\

\begin{proposition} The map $\psi_p$ : $F_p\overline{C_q}(LM)$ $\rightarrow$ $\overline{{\rm{Hom}}}(F^{p-1}B(A), A^{\vee})$ given by 
$$
\sigma \mapsto \Bigl( (\omega_1, \cdots, \omega_p) \mapsto \Bigl( \omega \mapsto \int_{\sigma} \pi^* \omega \wedge \int \omega_1 \cdots \omega_p \Bigr) \Bigr)
$$
is well-defined, chain map and $F_p\overline{C_q}(LM) \subset {\rm{ker}}\psi_p$. 
\end{proposition}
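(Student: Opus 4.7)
The plan is to verify the three assertions in order. Well-definedness rests on the standard invariance of iterated integrals under basepoint-preserving reparameterizations of $S^1$ together with three integration-by-parts identities; the chain-map identity is Stokes' theorem for the fibre integral combined with Chen's Leibniz formula for $d\!\int\omega_1\cdots\omega_p$; and the filtration/kernel clause is a counting consequence of the path-product formula for iterated integrals.

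\textbf{Well-definedness.} For reparameterization invariance, a smooth family $\tau: \Delta^q \to \text{Diff}(S^1,*)$ induces an orientation-preserving change of variables $t_i \mapsto \tau(t_i,\xi)$ on $[0,1]$ that fixes a neighborhood of $*$, and this preserves the fibre integral $P_p(\omega_1,\ldots,\omega_p)$ over $\Delta_p$; the factor $\pi^*\omega$ depends only on $\sigma(*)$ and is clearly invariant. To place the image in $\overline{\text{Hom}}$ I would check the three normalization identities. Each is an integration by parts in a simplex coordinate: the interior identity ($1 \leq i \leq r-1$) comes from integrating $df$ in the variable $t_i$ and produces the difference $f\omega_i - f\omega_{i-1}$; the two boundary identities come from integrating by parts in $t_1$ or $t_r$, where the surface term is $f(\sigma(*))$ and is absorbed into the $\omega$-slot via the pullback $\pi^*$, appearing as $\omega \wedge f\omega_1$ or $\omega \wedge f\omega_r$.

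\textbf{Chain map.} By Stokes,
$$\int_{\partial\sigma} \pi^*\omega \wedge \int\omega_1\cdots\omega_p \;=\; \int_\sigma d\!\left(\pi^*\omega \wedge \int\omega_1\cdots\omega_p\right).$$
I would expand the right side by Leibniz and substitute Chen's formula, in which $d\!\int\omega_1\cdots\omega_p$ equals $-\int d_B(\omega_1,\ldots,\omega_p)$ together with contributions from the two end faces $t_1=0$ and $t_p=1$ of $\Delta_p$. The $t_1=0$ face contributes $\omega_1$ evaluated at $\sigma(*)$, producing a factor $\omega \wedge \omega_1$ paired against $\int\omega_2\cdots\omega_p$; the $t_p=1$ face symmetrically produces $\omega \wedge \omega_p$ against $\int\omega_1\cdots\omega_{p-1}$. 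Matching the four groups of resulting terms against the four lines of the formula for $\delta\varphi$ recovers the chain identity. I expect this to be the principal obstacle: reconciling the sign conventions of Stokes, of Chen's sign in the definition of $\int\omega_1\cdots\omega_k$, and of the $|\omega|$- and $\varepsilon_i$-dependent signs in $\delta$ requires patient bookkeeping.

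\textbf{Filtration statement.} I read the third clause as the natural assertion that $\psi_p$ descends to the associated graded of the filtration---that is, chains lying at a deeper augmentation-ideal power are killed. Such an element is a $(p+1)$-fold or higher path product of augmentation-ideal chains inside $\overline{C}_*(\Omega_x M)$ or inside some $\overline{C}_q(LM)(\sigma)$. Under the path-product formula the fibre $\Delta_p$ decomposes into shuffled sub-simplices indexed by how the $p$ forms are apportioned across the factors; each augmentation-ideal factor must receive at least one form to contribute non-trivially, so with $p+1$ or more factors there is no valid distribution of $p$ forms, and the pairing with any length-$p$ bar word vanishes.
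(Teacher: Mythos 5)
Your proposal follows essentially the same route as the paper: the paper handles well-definedness by citing Chen's reparameterization-invariance and normalization identities (which you re-derive by change of variables and integration by parts), leaves the chain-map check to the standard Stokes/Chen computation, and proves the kernel clause by exactly your pigeonhole argument --- expanding the pairing of a product of augmentation-ideal factors via the path-product formula and observing that some factor must receive the empty iterated integral, hence contributes $(\sigma_k - s_\sharp\sigma)^*1 = 0$. Your re-indexed reading of the last clause (more augmentation-ideal factors than entries in the bar word) is the correct interpretation of the paper's computation, which pairs a $p$-fold product against words of length $p-1$.
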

\begin{proof}
The well-definedness can be verified by the following lemma which can be verified as in proposition 1.5, proposition 4.1.1 \cite{chen73}, and in proposition 1.5.3 \cite{chen77.2}.
 \begin{lemma}[Chen] 
(1) If $\alpha$ and $\beta$ $\in$ $C_*(LM)$ are equivalent by a reparameterization, then
$$
 \alpha^*\int\omega_1\cdots\omega_n = \beta^*\int\omega_1\cdots\omega_n. 
$$
 (2) If  $\tau_1,\tau_2 \in \overline{C_q}(LM)(\sigma)$, then
$$
\ \ \ (\tau_1\cdot\tau_2)^*\int\omega_1\cdots\omega_n = \sum\tau_1^*\int\omega_1\cdots\omega_i\wedge\tau_2^*\int\omega_{i+1}\cdots\omega_n.
$$
 (3) If f $\in$ $\Lambda^0M$, then for any i
$$
\ \ - \int\omega_1\cdots f\omega_{i-1}\cdots\omega_n + \int\omega_1\cdots f\omega_i\cdots\omega_n + \int\omega_1\cdots\omega_{i-1} \hspace{0.03cm} df \hspace{0.03cm} \omega_i\cdots\omega_n = 0.
$$
\end{lemma}
To verify $F_p\overline{C_q}(LM) \subset {\rm{ker}}\psi_p$, it suffices to show $({\rm{ker}} \varepsilon(\sigma))^p \subset {\rm{ker}} \psi_p$. Let $s$ denote the section of $\pi$, which sends points of $M$ to
the constant map. Take ($\sigma_1-s_\sharp\sigma$) $\cdot$ ($\sigma_2-s_\sharp\sigma$) $\cdot$ $\cdots$ $\cdot$($\sigma_p-s_\sharp\sigma$) $\in $ (${\rm{ker}}\varepsilon (\sigma))^p$, where $\sigma \in C_q(M)$ and $\sigma_i \in \overline{C_q}(LM)(\sigma)$. Then

\begin{eqnarray*}
\lefteqn{ \int_{\Delta^q}  (\sigma_1-s_\sharp\sigma)\cdot(\sigma_2-s\sigma) \cdot\cdots\cdot(\sigma_p-s_\sharp\sigma)^*\Bigr(\pi^*\omega\wedge\int\omega_1\cdots\omega_{p-1}\Bigl) } \\
 & = & \sum_{k = 1}^p \int_{\Delta^q} \sigma^*\omega\wedge(\sigma_1-s_\sharp\sigma)^*\int\omega_1\cdots(\sigma_k-s_\sharp\sigma)^*1\cdots\wedge(\sigma_p-s_\sharp\sigma)^*\int\omega_{p-1} \\
 & = & 0. 
\end{eqnarray*}
Therefore we obtain the proposition.
\end{proof}

Let $C_*(M,x)$ denote a set of smooth simplexes of $M$ neighborhood of whose vertices are at $x$ in $M$. We define 
$$
C\otimes sC^{\otimes p} =  C_*(M,x)\otimes sC_*(M,x)^{\otimes{p}}.
$$
Here $(sC_*(M, x))_q = C_{q+1}(M, x)$ or 0 according as $q > 0$ or $q \leq 0$. Its boundary is given by the sum of the boundary on each complex.
Let us construct a chain map $\Phi$ : $C\otimes sC^{\otimes p}$ $\rightarrow$ $F_p\overline{C_*}(LM)/F_{p+1}\overline{C_*}(LM)$ considering the following three cases:\\
{\bf case 1:}  If $(\sigma_1, \cdots, \sigma_p)$ $\in$ $\Bigr( sC(M,x)^{\otimes p} \Bigl)_1$, then
$$
\Phi : (\sigma_1, \cdots, \sigma_p) \longmapsto  (\sigma_1-x)\cdot(\sigma_2-x) \cdot\cdots\cdot(\sigma_p-x)
$$
where $x$ is regarded as a constant map.\\
{\bf case 2:}  If $(\sigma_1, \cdots, \sigma_p)$ $\in$ $\Bigr( sC(M,x)^{\otimes p} \Bigl)_1$, then
$$
\Phi : (\sigma_1, \cdots, \sigma_p) \longmapsto (\sigma_1-x)\cdot(\sigma_2-x) \cdots\overline{\sigma_i}\cdots(\sigma_p-x) 
$$
where $\overline{\sigma_i}$ : $\Delta^1$ $\ni$ $\xi$ $\mapsto$ $\overline{\sigma_i}(\xi)(t)$ $\in$ $\Omega_xM$ is  

\begin{eqnarray*}
\lefteqn{\overline{\sigma_i} (\xi) (t) } \\
& = & \begin{cases}
\sigma_i((1- \xi )((1-t)v_0+tv_2)+ \xi (1-2t)v_0+2 \xi tv_1), & \mbox{if} \ 0 \leq t \leq 1/2 \\
\sigma_i ((1- \xi )((1-t)v_0+tv_2)+ \xi (2-2t)v_1+ \xi (2t-1)v_2), & \mbox{if} \ 1/2 \leq t \leq 1\\
\end{cases}
\end{eqnarray*}
Here $v_0, v_1, v_2$ are the vertices of the standard simplex $\Delta^2$. \\
{\bf case 3:} If $(\gamma, \sigma_1, \cdots, \sigma_p)$ $\in$ $C_1(M,x) \otimes\Bigr( sC(M,x)^{\otimes p} \Bigl)_0$, then
$$
\Phi : (\gamma, \sigma_1, \cdots, \sigma_p) \longmapsto \gamma_t^{-1}(\sigma_1-x)\gamma_t\cdots\gamma_t^{-1}(\sigma_p-x)\gamma_t
$$
where $\gamma_t : [0, 1] \ni s \mapsto \gamma (st)$ $\in$ $M$, $t$ $\in \Delta^1$. 
\\                                                           

\begin{lemma} The following diagram commutes:
$$\begin{CD}
\Bigr( C\otimes sC^{\otimes p} \Bigl)_1 @ > \Phi >> F_p\overline{C_1}(LM)/F_{p+1}\overline{C_1}(LM)\\
@VV\partial V @VV\partial' V\\
\Bigr( C\otimes sC^{\otimes p} \Bigl)_0 @ > \Phi >> F_p\overline{C_0}(LM)/F_{p+1}\overline{C_0}(LM)\\
\end{CD}$$
\end{lemma}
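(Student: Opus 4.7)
I would split the verification into the two subcases comprising $(C \otimes sC^{\otimes p})_1$: (i) inputs of the form $x \otimes (\sigma_1,\dots,\sigma_p)$ with $(\sigma_1,\dots,\sigma_p) \in (sC^{\otimes p})_1$, so that exactly one $\sigma_i$ lies in $C_2(M,x)$ and the rest in $C_1(M,x)$ (the range of case 2 of $\Phi$); and (ii) inputs $\gamma \otimes (\sigma_1,\dots,\sigma_p)$ with $\gamma \in C_1(M,x)$ and every $\sigma_j \in C_1(M,x)$ (the range of case 3). In each subcase I would compute $\partial'\Phi$ and $\Phi\partial$ on the input and check equality in $F_p\overline{C_0}(LM)/F_{p+1}\overline{C_0}(LM)$.

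For subcase (i), the Leibniz rule in the algebra $\overline{C}_*(\Omega_x M)$ reduces $\partial'\Phi$ to $\pm(\sigma_1 - x)\cdots(\partial'\overline{\sigma_i})\cdots(\sigma_p - x)$, since each 0-chain $(\sigma_j - x)$ with $j \neq i$ has trivial boundary (its vertices are at $x$). Reading the endpoints of $\overline{\sigma_i}$ off its definition yields $\partial'\overline{\sigma_i} = d_2\sigma_i \cdot d_0\sigma_i - d_1\sigma_i$. Writing $d_2\sigma_i \cdot d_0\sigma_i = (d_2\sigma_i - x)(d_0\sigma_i - x) + (d_2\sigma_i - x) + (d_0\sigma_i - x) + x$, the quadratic piece combines with the remaining $p-1$ factors to land in $(\ker\varepsilon)^{p+1} \subset F_{p+1}$ and so is killed in the quotient. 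What survives is the alternating sum $(d_0\sigma_i - x) - (d_1\sigma_i - x) + (d_2\sigma_i - x)$, which by linearity and case 1 of $\Phi$ is (up to sign) $\Phi(x \otimes (\sigma_1,\dots,\partial\sigma_i,\dots,\sigma_p)) = \Phi(\partial(\text{input}))$.

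For subcase (ii), both $\partial\gamma = x - x = 0$ and each $\partial\sigma_j = x - x = 0$, so $\partial(\text{input}) = 0$ and the task is to show $\partial'\Phi \equiv 0 \pmod{F_{p+1}}$. Evaluating at $t = 0$ (where $\gamma_t$ is the constant path $x$) gives $\Phi|_{t=0} = B_1 \cdots B_p$ with $B_j := \sigma_j - x$, and at $t = 1$ gives $\Phi|_{t=1} = \gamma^{-1}B_1\gamma \cdot \gamma^{-1}B_2\gamma \cdots \gamma^{-1}B_p\gamma$. I would substitute each interior occurrence of $\gamma\gamma^{-1}$ by $x + D$ with $D := \gamma\gamma^{-1} - x \in \ker\varepsilon$. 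Every monomial in the expansion containing at least one $D$, after absorbing the outermost $\gamma^{-1}$ and $\gamma$ into the adjacent factors (which remain in $\ker\varepsilon$ since $\varepsilon(\gamma^{-1}(\sigma_1 - x)) = \varepsilon((\sigma_p - x)\gamma) = 0$), accumulates at least $p+1$ factors from $\ker\varepsilon$ and so lies in $F_{p+1}$. The surviving term is $\gamma^{-1}(B_1 \cdots B_p)\gamma$, and applying the identity $\gamma^{-1}X\gamma - X = (\gamma^{-1} - x)X + X(\gamma - x) + (\gamma^{-1} - x)X(\gamma - x)$ with $X = B_1 \cdots B_p \in (\ker\varepsilon)^p$ produces three summands each in $(\ker\varepsilon)^{p+1}$ or higher; hence $\Phi|_{t=1} \equiv \Phi|_{t=0} \pmod{F_{p+1}}$.

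The main technical obstacle is the double bookkeeping in subcase (ii): the interior $\gamma\gamma^{-1} = x + D$ substitutions and the outer conjugation identity must be combined so that every surviving monomial is genuinely in $(\ker\varepsilon)^{p+1}$, which requires using repeatedly that left or right multiplication by $\gamma^{\pm 1}$ can be absorbed into an adjacent $(\sigma_j - x)$ without losing the augmentation-zero property. Sign verification in subcase (i) is essentially formal but must be reconciled among the Leibniz rule, the suspension differential on $sC$, and the position of $\overline{\sigma_i}$ within the product before equality on the nose (rather than up to sign) can be asserted.
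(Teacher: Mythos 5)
Your proposal is correct and takes essentially the same route as the paper: the same split into the two degree-one subcases, with the case-(i) discrepancy identified as $(\sigma_1-x)\cdots(d_2\sigma_i-x)\cdot(d_0\sigma_i-x)\cdots(\sigma_p-x)\in(\ker\varepsilon)^{p+1}\subset F_{p+1}$, and the case-(ii) difference $\gamma^{-1}(\sigma_1-x)\gamma\cdots\gamma^{-1}(\sigma_p-x)\gamma-(\sigma_1-x)\cdots(\sigma_p-x)$ shown to lie in $F_{p+1}$. Your expansion $\gamma\gamma^{-1}=x+D$ together with the conjugation identity simply supplies the detail for the $F_{p+1}$-membership that the paper asserts without proof.
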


\begin{proof}
For case 2,
\begin{eqnarray*}
\lefteqn{ \partial'\Phi (\sigma_1, \cdots, \sigma_p) - \Phi\partial (\sigma_1, \cdots, \sigma_p) } \\
 & = & (\sigma_1-x)\cdots(\sigma_i^{(0)}\cdot\sigma_i^{(2)}-\sigma_i^{(1)}-\sigma_i^{(0)}+\sigma_i^{(1)}-\sigma_i^{(2)}+x)\cdots ( \sigma_p -x) \\
 & = & ( \sigma_1 -x) \cdots (\sigma_i^{(0)}-x) \cdot ( \sigma_i^{(2)} -x)\cdots (\sigma_p-x) \in F_{p+1}\overline{C_0}(LM)  
\end{eqnarray*}
 where $\sigma_i^{(1)}$, $\sigma_i^{(2)}$, $\sigma_i^{(3)}$ are the faces of $\sigma_i$. \\
\ \ For case 3, 
\begin{eqnarray*}
\lefteqn{  \partial'\Phi(\gamma, \sigma_1, \cdots, \sigma_p) - \Phi\partial'(\gamma, \sigma_1, \cdots, \sigma_p)} \\
 & = & \gamma^{-1}\cdot(\sigma_1-x)\cdot\gamma\cdots\gamma^{-1}\cdot(\sigma_p-x)\cdot\gamma
-(\sigma_1-x)\cdots(\sigma_p-x) \\
 & \in & \! \! \! \! F_{p+1}\overline{C_0}(LM). 
\end{eqnarray*} 
Therefore we obtain the lemma.
\end{proof} 
Proposition 5.2 gives the map
$$
H_q(F_p\overline{C}(LM)/F_{p-1}\overline{C}(LM)) \rightarrow H_q(\overline{{\rm{Hom}}}(F^pB(A)/F^{p-1}B(A),A^{\vee})). 
$$

\begin{lemma} For $q$ = 0, the following map is isomorphic:
$$
H_0(F_p\overline{C}(LM)/F_{p+1}\overline{C}(LM)) \cong H_0(\overline{{\rm{Hom}}}(F^pB(A)/F^{p-1}B(A),A^{\vee})).
$$  
\end{lemma}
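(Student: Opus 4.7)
The plan is to combine Proposition~3.2 (which identifies $H_0$ of the right-hand side with $\mathrm{Hom}(\otimes^p sH(A), H(A)^\vee)$) with Chen's theorem from \cite{chen75} (which identifies the graded pieces of the augmentation ideal of $\mathbb{R}\pi$ with $H^0$ of the associated graded of the bar complex). The chain map $\Phi$ constructed just above will serve as an explicit inverse to the map induced by $\psi_p$ on $H_0$.

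First, I would argue that every class in $H_0(F_p\overline{C}(LM)/F_{p+1}\overline{C}(LM))$ is represented, modulo reparameterization and higher filtration, by a chain of the form $(\sigma_1-s_\sharp\sigma)\cdots(\sigma_p-s_\sharp\sigma)$ where $\sigma$ is a 0- or 1-simplex of $M$ and the $\sigma_i$ lie over $\sigma$. Case~1 of $\Phi$ produces such chains when $\sigma$ is a vertex, case~3 handles the moving-basepoint case, and Lemma~5.4 shows that case~2 imposes exactly the multiplicative bar-complex relations while case~3 makes the resulting class conjugation-invariant modulo $F_{p+1}$.

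Next, I would compute $\psi_p\circ\Phi$ on such a generator using Lemma~5.3(2) to split each iterated integral along the path product and Lemma~5.3(1) to discard the constant corrections $-s_\sharp\sigma$ whose pullbacks annihilate iterated integrals of positive length. The result is the bilinear pairing
$$
\bigl((\sigma;\sigma_1,\ldots,\sigma_p),\,(\omega_1,\ldots,\omega_p)(\omega)\bigr) \longmapsto \int_\sigma \omega \cdot \prod_{i=1}^p \int_{\sigma_i}\omega_i,
$$
which decouples into the de~Rham evaluation $H_*(M)\otimes H^*(A)\to\mathbb{R}$ and the iterated-integral pairing between the graded pieces of the augmentation ideal of $\mathbb{R}\pi$ and $H^0(F^pB(\overline{A})/F^{p-1}B(\overline{A}))$. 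The latter is a perfect pairing by Chen's theorem \cite{chen75}, the former is perfect because $H^*(A)\cong H^*(M)$ and everything is of finite type, and Proposition~3.2 identifies the target with $\mathrm{Hom}(\otimes^p sH(A),H(A)^\vee)$; combining these three ingredients will give the claimed isomorphism on $H_0$.

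The main obstacle will be justifying that the reduction to a single basepoint is lossless: any 0-chain in $F_p\overline{C}(LM)/F_{p+1}\overline{C}(LM)$ must be shown to be homologous, modulo higher filtration, to a product of loops based at one point. Case~3 of $\Phi$ is designed for precisely this, through the conjugation-by-path construction $\gamma_t^{-1}(\sigma_i-x)\gamma_t$ whose boundary computation in Lemma~5.4 realises the required change of basepoint, but making the reduction entirely rigorous requires careful bookkeeping of filtration degrees: one must verify that the commutators appearing when moving all $p$ factors simultaneously contribute only to $F_{p+1}$, which is ultimately a consequence of the product rule for the augmentation ideal.
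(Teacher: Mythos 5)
Your surjectivity step (reducing a degree-zero class, modulo $F_{p+1}$, to products of loops based at one point via $\Phi$, with the conjugation chains of case 3 handling the change of basepoint) is exactly what the paper extracts from Lemma 5.4, so that half of the plan is sound (though note $\Phi$ is not literally an inverse of $\psi_p$: its domain is the auxiliary complex $C\otimes sC^{\otimes p}$). The genuine gap is in the injectivity step. You propose to conclude from a perfect pairing between the graded pieces of the augmentation ideal and $H^0(F^pB(\overline{A})/F^{p-1}B(\overline{A}))$, citing Chen's theorem from \cite{chen75}. This fails for two reasons. First, Chen's theorem identifies $\varprojlim_p\mathbb{R}\pi/J^p$ with $H^0(B(A))^{\vee}$, i.e.\ it concerns the abutment; the associated-graded statement it yields pairs $J^p/J^{p+1}$ with $F^pH^0(B(A))/F^{p-1}H^0(B(A))$, not with the $E_1$-term $H^0(F^pB(A)/F^{p-1}B(A))\cong\otimes^pH^1(A)$. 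Second, the pairing you actually invoke, between $J^p/J^{p+1}\otimes\mathbb{R}$ and $\otimes^pH^1(A)$, is degenerate whenever $\pi_1(M)$ is not free: for a closed surface the element $\sum_i\bigl((a_i-1)(b_i-1)-(b_i-1)(a_i-1)\bigr)$ vanishes in $J^2/J^3$, so $\dim(J^2/J^{3}\otimes\mathbb{R})<\dim(\otimes^2H^1)$ --- and surfaces are exactly the case needed in section 6. Relatedly, identifying $H_0(F_p\overline{C}(LM)/F_{p+1}\overline{C}(LM))$ with (the conjugacy-class version of) $J^p/J^{p+1}$ is itself incorrect: the relations cutting $\otimes^pH_1(M)$ down to $\tilde{J^p}/\tilde{J}^{p+1}$ live in deeper filtration and only act on later pages of the spectral sequence; the content of the lemma is precisely that at the $E_1$ level one gets the full tensor power.

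The paper's proof avoids Chen's $\pi_1$-theorem entirely. It precomposes with the surjection $\Phi : H_0(C\otimes sC^{\otimes p}) \twoheadrightarrow H_0(F_p\overline{C}(LM)/F_{p+1}\overline{C}(LM))$ and the identification $H_0(C\otimes sC^{\otimes p})\cong\otimes^pH_1(M)$, and then checks that the composite $\otimes^pH_1(M)\rightarrow{\rm{Hom}}(\otimes^pH^1(A),\mathbb{R})$ sends $(\sigma_1,\cdots,\sigma_p)$ to $(\omega_1,\cdots,\omega_p)\mapsto\int_{\sigma_1}\omega_1\cdots\int_{\sigma_p}\omega_p$, i.e.\ it is the $p$-fold tensor of the ordinary de Rham pairing, hence an isomorphism because $H^1(A)\cong H^1(M)$ and $H_*(M)$ is of finite type. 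Since the composite is an isomorphism and the first map is surjective, the middle map is an isomorphism. If you keep your surjectivity discussion but replace the appeal to \cite{chen75} by this elementary degree-one de Rham duality argument through $\Phi$, the proof goes through.
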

\begin{proof}
We obtain the following surjection by lemma 5.4.
$$
\Phi : H_0(C\otimes sC^{\otimes p}) \twoheadrightarrow H_0(F_p\overline{C}(LM)/F_{p+1}\overline{C}(LM)).
$$
Composing with the isomorphism $\otimes^pH_1(M) \cong H_0(C\otimes sC^{\otimes p})$, the map  
$$
\otimes^pH_1(M) \twoheadrightarrow H_0(F_p\overline{C}(LM)/F_{p+1}\overline{C}(LM)) \rightarrow {\rm{Hom}}(\otimes^pH^1(A), \mathbb{R})\\
$$
is given by
$$
(\sigma_1, \cdots, \sigma_n) \mapsto \Bigr( (\omega_1, \cdots, \omega_p) \mapsto \int_{\sigma_1}\omega_1\cdots\int_{\sigma_p}\omega_p \Bigl).
$$
This is isomorphic and we obtain the lemma. 
\end{proof}

\begin{lemma} For $q$ = 1, the following map surjective:
$$
H_1(F_p\overline{C}(LM)/F_{p+1}\overline{C}(LM)) \twoheadrightarrow H_1({\rm{Hom}}(F^pB(A)/F^{p-1}B(A),A^{\vee})).
$$
\end{lemma}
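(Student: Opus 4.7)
The plan is to parallel Lemma 5.5 but use the degree-one output of the chain map $\Phi$ from Lemma 5.4, which comes from cases 2 and 3 of the construction. First I would verify that $\Phi$ induces a surjection
$$
H_1(C \otimes sC^{\otimes p}) \twoheadrightarrow H_1(F_p\overline{C}(LM)/F_{p+1}\overline{C}(LM)).
$$
Any 1-cycle in the quotient is represented, up to boundary, either by a product of $p$ loops one of which carries a 1-parameter family (case 2) or by a path-conjugation chain (case 3); this is the degree-one analog of the argument in Lemma 5.5, using Lemma 5.4 together with the decomposition $F_p = (\ker\varepsilon)^p \oplus \bigoplus_\sigma (\ker\varepsilon(\sigma))^p$ of the augmentation filtration.

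Next I would decompose both sides by K\"unneth. Using $H_0(C_*(M,x)) = \mathbb{R}$, $H_1(C_*(M,x)) = H_1(M)$, $H_0(sC) = H_1(M)$, $H_1(sC) = H_2(M)$, one obtains
$$
H_1(C \otimes sC^{\otimes p}) \cong \Bigl(\bigoplus_{i=1}^p H_1(M)^{\otimes(i-1)} \otimes H_2(M) \otimes H_1(M)^{\otimes(p-i)}\Bigr) \oplus H_1(M)^{\otimes(p+1)},
$$
the two summands being case 2 and case 3 respectively. On the Hom side, extending Proposition 3.2 to total degree one and using the hypothesis $H^q(A) \cong H^q(\Lambda M)$ for $q \leq 1$, injective at $q = 2$, gives
$$
H_1(\overline{\rm{Hom}}(F^pB(A)/F^{p-1}B(A), A^\vee)) \cong {\rm Hom}(\otimes^p sH(A), H(A)^\vee)_1,
$$
which splits into a $(1,0)$-piece (one bar slot in $H^2(A)$, output paired with $A^0 = \mathbb{R}$) that matches case 2, and a $(0,1)$-piece (all bar slots in $H^1(A)$, output in $H^1(A)^\vee$) that matches case 3. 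The computation of $\psi_p \circ \Phi$ on each summand then reduces, by iterated-integral manipulations based on Lemma 5.3, to the pairings
$$
(\sigma_1,\ldots,\sigma_p) \longmapsto \Bigl((\omega_1,\ldots,\omega_p) \mapsto \prod_{j\neq i}\int_{\sigma_j}\omega_j \cdot \int_{\sigma_i}\omega_i\Bigr)
$$
on case 2 (with $\omega_i \in A^2$) and
$$
(\gamma,\sigma_1,\ldots,\sigma_p) \longmapsto \Bigl((\omega_1,\ldots,\omega_p) \mapsto \bigl(\omega \mapsto \int_\gamma \omega \cdot \prod_j \int_{\sigma_j}\omega_j\bigr)\Bigr)
$$
on case 3 (with $\omega \in A^1$), and these hit all generators of the target.

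The main obstacle is the case 3 computation. Expanding the iterated integral of $\pi^*\omega \wedge \int\omega_1 \cdots \omega_p$ over $\gamma_t^{-1}(\sigma_1-x)\gamma_t \cdots \gamma_t^{-1}(\sigma_p-x)\gamma_t$ by Lemma 5.3(2) produces many terms, indexed by distributions of the $\omega_j$'s among the $2p+1$ factors; the terms where some $\omega_j$ is integrated against a $\gamma_t^{\pm 1}$ factor turn out to lie in $F^{p+1}$, while those placing a form against an interior constant map at $x$ vanish by Lemma 5.3(3). What remains is the single term in which each $(\sigma_j-x)$ catches its own $\omega_j$ in order and the outer $\pi^*\omega$ sees $\gamma$, giving the stated formula and hence surjectivity onto the case 3 generators; together with case 2 this exhausts the target.
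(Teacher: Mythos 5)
Your proposal is essentially the paper's own argument: push the degree-one generators coming from cases 2 and 3 of the map $\Phi$ of Lemma 5.4 through $\psi_p$, identify $H_1(\overline{{\rm{Hom}}}(F^pB(A)/F^{p-1}B(A),A^{\vee}))$ with ${\rm{Hom}}(\otimes^p sH(A),H(A)^{\vee})_1$ via Proposition 3.2, and check that the resulting functionals (one slot in $H^2(A)$ paired against $A^0$, resp.\ all slots in $H^1(A)$ with output in $H^1(A)^{\vee}$) exhaust the target, using that $H^1(A)\cong H^1(\Lambda M)$ and $H^2(A)\rightarrow H^2(\Lambda M)$ is injective. The one point to correct is your first step: the surjectivity of $\Phi$ onto $H_1(F_p\overline{C}(LM)/F_{p+1}\overline{C}(LM))$ is not established by Lemmas 5.4--5.5 (which only give the commuting square in degrees $1\rightarrow 0$ and surjectivity on $H_0$), and an arbitrary degree-one element of $(\ker\varepsilon(\sigma))^p$ over a $1$-simplex $\sigma$ is not obviously homologous to a case 2 or case 3 chain; but this claim is also unnecessary, since surjectivity of the composite $\ker\partial\rightarrow H_1(F_p\overline{C}/F_{p+1}\overline{C})\rightarrow{\rm{Hom}}(\otimes^p sH(A),H(A)^{\vee})_1$ already forces the second map to be surjective, which is exactly how the paper argues; simply drop that step.
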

\begin{proof}
It suffices to show that the following map obtained by lemma 5.4 is surjective.
$$
{\rm{ker}} \partial \rightarrow H_1(F_p\overline{C}(LM)/F_{p+1}\overline{C}(LM)) \rightarrow {\rm{Hom}}(\otimes^p sH(A),H(A)^{\vee})_1 \\
$$
If $(\gamma, \sigma_1, \cdots, \sigma_p)$ $\in$ ${\rm{ker}}\partial$ $\cap$ $\biggr( C_0(M,x)\otimes\Bigr( sC(M,x)^{\otimes p} \Bigl)_1 \biggl)$, then
$$
(\gamma, \sigma_1, \cdots, \sigma_p) \mapsto  \Biggr( (\omega_1, \cdots, \omega_p) \mapsto \biggr( \omega \mapsto
 \begin{cases} \int_\gamma \omega \int_{\sigma_1}\omega_1\cdots\int_{\sigma_p}\omega_p, & \mbox{if} \  deg \ \omega = 0 \\
0, & \mbox{otherwise} \\
\end{cases} 
\biggl) \Biggl) 
$$
through the above map. \\
If $(\gamma, \sigma_1, \cdots, \sigma_p)$ $\in$ ${\rm{ker}}\partial$ $\cap$ $\biggr($ $C_1(M,x) \otimes\Bigr( sC(M,x)^{\otimes p} \Bigl)_0$ $\biggl)$, then
$$
(\gamma, \sigma_1, \cdots, \sigma_p) \mapsto \Biggr( (\omega_1, \cdots, \omega_p) \mapsto \biggr( \omega \mapsto \int_{\gamma}\omega\int_{\sigma_1}\omega_1\cdots\int_{\sigma_p}\omega_p 
\biggl) \Biggl)   
$$
when deg $\omega$ = 1. 
Then we can verify the surjectivity and obtain the lemma. 
\end{proof}

\begin{proof}[\it Proof of theorem 1.1] 
Consider the spectral sequences of $\overline{C}(LM)/F_p\overline{C}(LM)$ and $\overline{{\rm{Hom}}}(F^{p-1}B(A),A^{\vee})$ associated with $F_q\overline{C}(LM)$ and $\overline{{\rm{Hom}}}(F^qB(A),A^{\vee})$, respectively. 
Lemma 5.5 asserts that $\psi_p$ is isomorphic on $E_1$-level at degree $0$:
$$
H_0(F_q\overline{C}(LM)/F_{q+1}\overline{C}(LM)) \cong H_0(\overline{{\rm{Hom}}}(F^{q}B(A)/F^{q-1}B(A),A^{\vee})).
$$ 
Lemma 5.6 asserts that $\psi_p$ is surjective on $E_1$-level at degree $1$: 
$$
H_1(F_q\overline{C}(LM)/F_{q+1}\overline{C}(LM)) \twoheadrightarrow H_1(\overline{{\rm{Hom}}}(F^qB(A)/F^{q-1}B(A),A^{\vee})).
$$
Then there is an isomorphism on $E_r$-level at degree 0 for $r$ $\geq$ $1$.
We have
$$
\mathbb{R}\tilde{\pi}/\tilde{J^p} \cong H_0(\overline{C}(LM)/F_p\overline{C}(LM)) \cong H_0(\overline{{\rm{Hom}}}(F^pB(A), A^{\vee})). 
$$
Therefore we obtain the theorem. 
\end{proof}

\section{The Goldman bracket}

This section is devoted to the proof of the following theorem.
\begin{theorem} 
Let $M$ be a compact closed oriented surface with genus g. Then the Goldman bracket induces a Lie algebra structure on $\varprojlim_p\mathbb{R}\tilde{\pi}/\tilde{J^p}$and there is an isomorphism of Lie algebras 
$$
\varprojlim_p\mathbb{R}\tilde{\pi}/\tilde{J^p} \cong H_0({{\rm{Hom}}}(B(H^*(M)), H^*(M)^{\vee})).
$$
\end{theorem}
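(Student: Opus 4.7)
The plan is to combine Theorem 5.1 with a direct comparison of Lie brackets. Since every compact oriented surface is formal, one can choose a differential graded subalgebra $A \subset \Lambda M$ with $A^0 = \mathbb{R}$ satisfying the hypotheses of Theorem 5.1 and quasi-isomorphic via the inclusion to $H^*(M)$ (for instance, the subalgebra generated by harmonic representatives for a Riemannian metric). Running Theorem 5.1 through this quasi-isomorphism gives the underlying vector-space identification, and the normalization conditions defining $\overline{{\rm{Hom}}}$ degenerate when $A^0 = \mathbb{R}$ (for constant $f$ one has $df=0$ and $f\omega_i - f\omega_i = 0$), so $H_0(\overline{{\rm{Hom}}}(B(H^*(M)), H^*(M)^\vee))$ agrees with $H_0({\rm{Hom}}(B(H^*(M)), H^*(M)^\vee))$. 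It then suffices to construct Lie brackets on both sides and show the isomorphism intertwines them.

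For the left side, it is classical that the Goldman bracket respects the augmentation filtration: $[\tilde{J^p}, \tilde{J^q}] \subset \tilde{J^{p+q-1}}$. Indeed, concatenating two loops at a transverse intersection point merges two augmentation-ideal factors into one, so each term of the bracket retains at least $p+q-1$ such factors. Hence the bracket extends continuously to the completion.

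For the right side, Poincaré duality makes $H^*(M)$ a cyclic Frobenius algebra via $\langle a, b \rangle := \int_M a \wedge b$. This induces a necklace-type Lie bracket on $H_0({\rm{Hom}}(B(H^*(M)), H^*(M)^\vee))$: given two functionals, contract one bar-slot in each via the Poincaré pairing and concatenate the remaining slots cyclically, with Koszul signs. Jacobi is a standard cyclic-Frobenius calculation; structurally, this bracket is the degree-zero shadow of the string bracket transported to the $A^\vee$-valued side via Lemma 4.2.

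The hard part is to verify that the iterated-integral pairing $[\gamma] \mapsto ((\omega_1, \ldots, \omega_n) \mapsto \int_\gamma \omega_1 \cdots \omega_n)$ intertwines the two brackets. I would represent the Poincaré dual of the diagonal $\Delta \subset M \times M$ by $\sum_i e_i \otimes e^i$ in dual bases of $H^*(M)$; the signed count of transverse intersection points in the Goldman bracket is then rewritten, via Stokes' theorem and Chen's reduction formulas from Section 2, as a contraction of the iterated integrals of the two loops against $\sum_i e_i \otimes e^i$, which matches the necklace bracket. The main obstacle is purely combinatorial: reconciling the signs of transverse intersection, the cyclic permutation in the bar complex, and the Koszul rule. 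It suffices to check the identity on representative loops realizing a symplectic basis of $H^1(M)$, after which the full identification of Lie algebras follows by bilinearity and continuity in the filtration.
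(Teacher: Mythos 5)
Your overall architecture (Theorem 5.1 plus formality plus a Frobenius/necklace bracket compared via the Thom class of the diagonal and iterated integrals) is the paper's, but there is a genuine error at the heart of the filtration step. The Goldman bracket does \emph{not} satisfy $[\tilde{J}^p,\tilde{J}^q]\subset \tilde{J}^{p+q-1}$: resolving a transverse intersection between a factor $\sigma_i$ and a factor $\tau_j$ consumes one augmentation--ideal factor from \emph{each} side (the bracket of products is a difference of terms, one with all $p+q$ factors and one with only $p+q-2$), so the correct statement is $[\tilde{J}^p,\tilde{J}^q]\subset \tilde{J}^{p+q-2}$ (the paper's Proposition 6.2). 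Already for $p=q=1$ your claim fails: if $a_1,b_1$ are simple loops with intersection number one, then $[(a_1-x),(b_1-y)]=[a_1,b_1]$ is (up to sign) the class of a single loop, which is not in $\tilde{J}$. Your claim is also internally inconsistent with your own right-hand side: the necklace bracket you describe contracts one bar slot from each argument and hence drops word length by $2$, so if the Goldman bracket dropped the filtration only by $1$, the induced bracket on the associated graded $\tilde{J}^p/\tilde{J}^{p+1}\otimes \tilde{J}^q/\tilde{J}^{q+1}\to \tilde{J}^{p+q-2}/\tilde{J}^{p+q-1}$ would be zero and could not match the nontrivial necklace bracket. The drop-by-two estimate is what makes the bracket compatible with the degree $-2$ Poincar\'e duality shift (the paper's Proposition 6.3 and the $E^1$-level formula with $\alpha_i,\beta_i$ inserted), and it still gives continuity on the completion, so the statement survives, but your justification as written is wrong.

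Two further gaps: first, the reduction ``it suffices to check the identity on representative loops realizing a symplectic basis of $H^1(M)$, after which the full identification follows by bilinearity and continuity'' is not valid. There is no product on $\mathbb{R}\tilde{\pi}$, so knowing the bracket on basis loops does not determine it on classes of products $(\sigma_1-x)\cdots(\sigma_p-x)$; the intertwining must be verified on the whole associated graded for arbitrary $p,q$, and this is exactly the substantial computation in the paper (its Proposition 6.4, using the Thom class of the diagonal supported away from the intersection points, the rotation map $e_\sharp$, and the Getzler--Jones--Petrack formula, together with the vanishing lemma for $(e_\sharp\sigma)^*\int\omega_1\cdots\omega_{p-2}$ on $F_p$). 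Second, your formality input is off in detail: the forms generated by harmonic representatives do not give a sub-DGA quasi-isomorphic to $H^*(M)$ via inclusion (for genus $\geq 2$ the degree-two part is too large, e.g.\ $\alpha_1\wedge\beta_1-\alpha_2\wedge\beta_2$ is exact but nonzero), and $H^*(M)$ is not a subalgebra of $\Lambda M$ at all; one needs the $dd^c$-lemma zig-zag $(\Lambda M,d)\leftarrow(\ker d^c,d)\rightarrow(H^*(M),0)$ together with the invariance of $H_0(\overline{{\rm Hom}}(B(-),(-)^{\vee}))$ under quasi-isomorphisms, as in the paper's Lemmas 6.6--6.7.
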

Goldman showed that the vector space spanned by the free homotopy classes of closed curves on a closed oriented surface has a Lie algebra structure \cite{goldman}. This work led Chas and Sullivan to the string topology. We would verify that this structure makes $\varprojlim_p\mathbb{R}\tilde{\pi}/\tilde{J^p}$ a Lie algebra. On the other hand, we can construct a bracket on $H_0({{\rm{Hom}}}(B(H^*(M)), H^*(M)^{\vee}))$ by the cup product defined in section 3 and the Connes's operator. Here we regard $H^*(M)$ as a differential graded algebra with a trivial differential. Theorem 6.1 asserts that those two Lie algebras are isomorphic.

First we describe a relation between this bracket and the augmentation ideal of the group ring of the surface group to induce a Lie algebra structure on $\varprojlim_p\mathbb{R}\tilde{\pi}/\tilde{J^p}$. Then we construct a bracket on $H_0(\overline{{\rm{Hom}}}(B(A), A^{\vee}))$ and verify the isomorphism of Lie algebras
$$
\varprojlim_p\mathbb{R}\tilde{\pi}/\tilde{J^p} \cong H_0(\overline{{\rm{Hom}}}(B(A), A^{\vee})).
$$
Finally we verify the isomorphism 
$$
H_0(\overline{{\rm{Hom}}}(B(A), A^{\vee})) \cong H_0({\rm{Hom}}(B(H^*(M)), H^*(M)^{\vee}). 
$$

The following proposition makes $\varprojlim_p\mathbb{R}\tilde{\pi}/\tilde{J^p}$ a Lie algebra.

\begin{proposition}
(1) If $p$ $\geq$ $1$ and $q$ $\geq$ $2$, then $[\tilde{J^p}, \tilde{J^q}] \subset \tilde{J}^{p+q-2}.$ \\
(2) If p $\geq $ $2$ , then $[\tilde{J^p}, \mathbb{R}\tilde{\pi}] \subset \tilde{J}^{p-1}.$ 

\end{proposition}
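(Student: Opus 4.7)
The plan is to prove both filtration bounds by direct geometric computation: express elements of $\tilde{J}^p$ in terms of based loop representatives, compute the Goldman bracket via the standard intersection-with-signs formula, and track the algebraic cancellations produced by the alternating-sign expansion of $(g_1-1)\cdots(g_p-1)$. Fix a basepoint $x_0 \in M$ and choose smooth loops $g_1, \ldots, g_p, h_1, \ldots, h_q, k$ in general position representing elements of $\pi_1(M, x_0)$. Then $x := (g_1-1)\cdots(g_p-1) \in J^p$ projects to
$$
\tilde{x} = \sum_{S \subset \{1,\ldots,p\}} (-1)^{p-|S|}\,\widetilde{g_S},
$$
where $g_S$ denotes the ordered concatenation of those $g_i$ with $i\in S$; choosing each $\widetilde{g_S}$ to be represented by the corresponding concatenation of the fixed loops, every transverse intersection of one such representative with another reduces to a single crossing $g_i \cap h_j$ or $g_i \cap k$.

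For statement (2), I would first establish the base case $p=2$: show that $[\widetilde{gh}-\tilde{g}-\tilde{h}+\tilde{1},\, \tilde{k}] \in \tilde{J}$ for all $g,h,k \in \pi$. Representing $gh$ by the concatenation of $g$ and $h$, the intersection points of $gh$ with $k$ partition into those lying on the $g$-part and those on the $h$-part, and at each point the Goldman contribution to $[\widetilde{gh},\tilde{k}]$ differs from the corresponding contribution to $[\tilde{g},\tilde{k}]$ or $[\tilde{h},\tilde{k}]$ only by the insertion of the other word (yielding an element whose image under the augmentation $\mathbb{R}\tilde{\pi}\to\mathbb{R}$ vanishes once we subtract the contribution of $\tilde{1}$, which is zero since the constant loop has no transverse intersection with $k$). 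The general case then follows by induction on $p$, using the factorization $(g_1-1)\cdots(g_p-1) = [(g_1-1)\cdots(g_{p-1}-1)](g_p-1)$ together with a Leibniz-type property for a basepointed lift of the Goldman bracket (in the sense of Turaev or Kawazumi--Kuno), whose projection to $\mathbb{R}\tilde{\pi}$ recovers Goldman's bracket.

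For statement (1), the improvement from degree $-1$ to $-2$ comes from applying the same cancellation argument symmetrically: when the second argument is also in $J^q$ with $q\geq 2$, each intersection-point contribution appears in pairs with opposite signs after the alternating sums on both sides are expanded, producing one additional factor in $\tilde{J}$. Concretely, the base case is $[\tilde{g}-\tilde{1},\, \widetilde{h_1 h_2}-\tilde{h_1}-\tilde{h_2}+\tilde{1}] \in \tilde{J}$, which follows by comparing the intersections of $g$ with the concatenation $h_1 h_2$ to the separate intersections of $g$ with $h_1$ and $g$ with $h_2$ and checking that the resulting conjugacy classes pair up modulo $\tilde{J}$. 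A double induction on $(p,q)$, combined with part (2), then yields $[\tilde{J}^p, \tilde{J}^q] \subset \tilde{J}^{p+q-2}$ for $q \geq 2$.

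The main obstacle will be the combinatorial bookkeeping: identifying intersection points coherently across all the sub-word representatives simultaneously, and checking that signs, orderings, and the various conjugations that occur when re-basing loops at intersection points combine correctly. A related subtlety is that the Goldman bracket is defined on conjugacy classes rather than on $\mathbb{R}\pi$, so the induction on $p$ really takes place via a basepointed lift whose descent to $\mathbb{R}\tilde{\pi}$ recovers Goldman's bracket; making this lift precise and verifying its Leibniz property is the one technically delicate step in the argument.
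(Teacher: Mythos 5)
Your route---re-expanding $(g_1-1)\cdots(g_p-1)$ into signed subwords, treating the easy base cases directly, and running an induction through a basepointed lift of the Goldman bracket with a Leibniz property (Turaev, Kawazumi--Kuno)---is organized quite differently from the paper's argument. The paper proves the proposition in one step, with no induction and no lift: for $\sigma=(\sigma_1-x)\cdots(\sigma_p-x)$ and $\tau=(\tau_1-y)\cdots(\tau_q-y)$ it writes $[\sigma,\tau]$ as a sum over intersection points $s\in\sigma_i\sharp\tau_j$ of terms of the form $\gamma_{s,x}(\cdots)\gamma_{s,x}^{-1}\cdot\gamma_{s,y}(\cdots)\gamma_{s,y}^{-1}$, in which at least $p-1$ factors $(\sigma_k-x)$ and $q-1$ factors $(\tau_l-y)$ survive, so each term visibly lies in $\tilde{J}^{p+q-2}$ (conjugation preserves powers of the augmentation ideal). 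Your base cases are fine: a difference of two conjugacy classes is the projection of $(a-1)-(b-1)\in J$, hence lies in $\tilde{J}$. And if you do have the Kawazumi--Kuno operations in hand, your scheme actually simplifies: the derivation property alone gives (2), since a derivation of $\mathbb{R}\pi$ sends $J^p$ into $J^{p-1}$ ($J$ being a two-sided ideal), and (1) follows once one also shows that the operation attached to a class in $\tilde{J}^q$ maps $\mathbb{R}\pi$ into $J^{q-1}$.

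The gap is that this last statement---equivalently, the existence of the basepointed lift together with its Leibniz and filtration properties---is precisely the content of the proposition, and you invoke it rather than prove it, explicitly deferring it as ``the delicate step.'' In addition, the mechanism you describe for part (1), namely intersection-point contributions ``appearing in pairs with opposite signs,'' is not what produces the gain of two: there is no cancellation between intersection points. What happens is a resummation: at a point $s\in g_i\sharp h_j$ only the subwords $g_S$, $h_T$ with $i\in S$, $j\in T$ pass through $s$, and summing their signed contributions reassembles them into conjugates of $g_i(g_{i+1}-1)\cdots(g_{i-1}-1)$ and $h_j(h_{j+1}-1)\cdots(h_{j-1}-1)$; since $g_i(g_{i+1}-1)\cdots(g_{i-1}-1)=(g_i-1)(g_{i+1}-1)\cdots(g_{i-1}-1)+(g_{i+1}-1)\cdots(g_{i-1}-1)\in J^{p-1}$, and likewise on the $h$-side, each intersection point contributes an element of $\tilde{J}^{p+q-2}$. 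Until you either carry out this computation (which is, in effect, the paper's entire proof) or actually construct the lift and establish its Leibniz and filtration behavior, the inductive steps for both (1) and (2) are not justified.
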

\begin{proof}
We give a proof of {\it(1)}. Take $(\sigma_1-x)\cdots(\sigma_p-x)$ $\in$ $\tilde{J^p}$, $(\tau_1-y)\cdots(\tau_q-y)$ $\in$ $\tilde{J^q}$, where $\sigma_i$ $\in$ $\Omega_xM$ and $\tau_i$ $\in$ $\Omega_yM$. Assume that all curves are immersions and $\sigma_i$ $\tau_j$ intersect transversally for any $i, j$. Let  $\{\sigma_i\sharp\tau_j\}$ denote the set of intersection points of $\sigma_i$ and $\tau_j$. Also assume that all the intersection points are distinct i.e.  $\{\sigma_i\sharp\tau_j\}$ $\cap$ $\{\sigma_k\sharp\tau_l\}$  = $\phi$ if $i$ $\not=$ $k$ or $j$ $\not=$ l. Then,   
\begin{eqnarray*}
\lefteqn{[\sigma, \tau] = \sum_{i,j}\sum_{s\in\sigma_i\sharp\tau_j} \{ \varepsilon(s ; \sigma_i, \tau_j) \gamma_{s,x}\cdot (\sigma_i-x) \cdots (\sigma_p-x) (\sigma_1-x) \cdots  } \\
 & & \ \ \ \ \ \ \cdot (\sigma_{i-1}-x) \cdot\gamma_{s,x}^{-1}\cdot \cdot \gamma_{s,y}\cdot ( \tau_j-y) \cdots( \tau_q-y )( \tau_1-y) \cdots ( \tau_{j-1}-y)\cdot\gamma_{s,y}^{-1}  \\
 & & \ \ - \gamma_{s,x}\cdot (\sigma_{i+1}-x) \cdots (\sigma_p-x) (\sigma_1-x) \cdots (\sigma_{i-1}-x) \cdot \gamma_{s,x}^{-1}\cdot \\
& & \ \ \ \ \ \ \cdot \gamma_{s,y}\cdot ( \tau_
{j+1}-y) \cdots( \tau_q-y )( \tau_1-y) \cdots ( \tau_{j-1}-y)\cdot\gamma_{s,y}^{-1} \} \\ 
 && \in \tilde{J}^{p+q-2}. 
\end{eqnarray*}
Here $\gamma_{s,x}$ is a path from $s$ to $x$ along $\sigma_i$ and $\gamma_{s,y}$ is a path from $s$ to $y$ along $\tau_j$.  The proof of {\it(2)} can be verified in the same way.
\end{proof}
Let A be a differential graded subalgebra of $\Lambda M$ such that $H^*(A) \cong H^*(\Lambda M)$ by the inclusion. 
\begin{proposition}
There is an isomorphism of vector spaces 
$$
H_*(\overline{{\rm{Hom}}}(F^pB(A), A)) \cong H_{*-2}(\overline{{\rm{Hom}}}(F^pB(A), A^{\vee})).
$$
\end{proposition}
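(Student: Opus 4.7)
The plan is to transplant the argument of Lemma 4.2 verbatim, merely restricting the Poincar\'e duality map to the bar-filtration subcomplex $F^pB(A)$. Concretely, I would define
$$
P : \overline{{\rm{Hom}}}(F^pB(A), A) \rightarrow \overline{{\rm{Hom}}}(F^pB(A), A^{\vee})
$$
by the same formula as in Lemma 4.2,
$$
P(\varphi)(\omega_1,\ldots,\omega_n)(\omega) = \int_M \omega \wedge \varphi(\omega_1,\ldots,\omega_n).
$$
Because this operation only touches the coefficient value $\varphi(\omega_1,\ldots,\omega_n)$ and is insensitive to the bar-length, the verification that $P$ is well-defined on the normalized subcomplex and is a chain map is identical to the one carried out for Lemma 4.2; in particular, restricting from $B(A)$ to $F^pB(A)$ creates no new issues, since $P(\varphi)$ vanishes on inputs of bar-length $>p$ whenever $\varphi$ does.

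Next I would equip both sides with the coefficient-degree filtration used in Lemma 4.2:
$$
F_q\overline{{\rm{Hom}}}(F^pB(A), A) = \{\varphi \mid \varphi(\omega_1,\ldots,\omega_n) \in A^{\geq 2-q}\},
$$
together with its analog on the $A^{\vee}$ side (as in Theorem 4.1). Since $M$ is a surface, this filtration has only three nontrivial layers ($q = 0, 1, 2$), so convergence of the associated spectral sequences is automatic regardless of whether we work with $B(A)$ or $F^pB(A)$. On the $E_2$-page, $P$ takes the form
$$
H^{2-q}(A) \otimes H_*(F^pB(A)^{\vee}) \longrightarrow H_q(A^{\vee}) \otimes H_*(F^pB(A)^{\vee}),
$$
which is classical Poincar\'e duality for the closed oriented surface $M$ (using $H^*(A) \cong H^*(M)$) tensored with the identity on the bar-factor. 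Hence $P$ induces an isomorphism on $E_2$, and the comparison theorem yields the stated isomorphism.

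The only point that requires any real thought is the interplay between the bar-filtration $F^pB(A)$ and the coefficient-degree filtration $F_q$. The two filtrations are independent: $F_q$ acts purely on the $A$-valued side of Hom, while $F^pB(A)$ cuts off the source. Consequently the $E_1$-terms decompose as a tensor product of the bar-dual cohomology of $F^pB(A)$ with the cohomology of $A$, and the Poincar\'e pairing lives entirely in the second factor. Beyond this bookkeeping, no new calculation is needed, since the surface assumption ($d=2$) is exactly what makes the degree shift $*-2$ appear and ensures perfect duality on cohomology.
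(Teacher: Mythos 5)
There is a genuine gap. You transplant the coefficient-degree filtration of Lemma 4.2 and claim that on the $E_2$-page the map becomes (Poincar\'e duality on the coefficient) $\otimes$ (identity on the bar factor). That identification of $E_2$ as a tensor product requires the $d_1$-differential to consist only of the (dual) de Rham differential acting on the coefficient. But the boundary of ${\rm{Hom}}(B(A),A)$ also contains the terms $\omega_1\wedge\varphi(\omega_2,\dots,\omega_r)$ and $\varphi(\omega_1,\dots,\omega_{r-1})\wedge\omega_r$ (and, on the $A^{\vee}$ side, $\varphi(\omega_2,\dots,\omega_r)(\omega\wedge\omega_1)$, etc.); when the argument being wedged has degree one, these terms shift the coefficient filtration by exactly one and therefore also contribute to $d_1$. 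In Section 4 this is harmless because $M$ is simply connected ($H^1=0$), but here $M$ is a genus-$g$ surface, $H^1(A)\neq 0$, and these contributions are precisely the algebraic shadow of the twisted coefficients in the loop fibration over a non-simply-connected base. So the claimed $E_2$-computation --- the crux of your argument --- is unjustified, and the ``verbatim transplant'' of Lemma 4.2 is exactly what the non-simply-connected setting does not allow.

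The paper avoids this by filtering instead by bar length, i.e.\ by $F^qB(A)$, which is a finite filtration on $F^pB(A)$ (this, not bookkeeping convenience, is the reason the proposition is stated for $F^pB(A)$ rather than $B(A)$). By Proposition 3.2 the $E_1$-terms are ${\rm{Hom}}(\otimes^q sH(A), H(A))$ and ${\rm{Hom}}(\otimes^q sH(A), H(A)^{\vee})$, on which the same map $P(\varphi)(\omega_1,\dots,\omega_q)(\omega)=\int_M\omega\wedge\varphi(\omega_1,\dots,\omega_q)$ acts as the identity on the bar variables and as Poincar\'e duality $H(A)\to H(A)^{\vee}$ (degree shift $2=\dim M$) on the coefficient, hence is an isomorphism already at $E_1$; finiteness of the filtration then gives convergence and the comparison. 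If you want to keep your map $P$ (which is indeed the right map, and is a filtered chain map), the fix is to replace your coefficient filtration by the bar-length filtration and invoke Proposition 3.2, rather than the Lemma 4.2 computation whose validity rests on simple connectivity.
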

\begin{proof}
We define $P : H_{*-2}(\overline{{\rm{Hom}}}(F^pB(A), A)) \rightarrow H_*(\overline{{\rm{Hom}}}(F^pB(A), A^{\vee}))$ by 
$$
P(\varphi)(\omega_1, \cdots, \omega_p)(\omega) = \int_M \omega\wedge\varphi(\omega_1, \cdots, \omega_p).
$$
This map preserves the filtrations. On $E^1$-level, the map 
$$
{\rm{Hom}}(\otimes^qH(A), H(A)) \rightarrow {\rm{Hom}}(\otimes^qH(A), H(A)^{\vee})
$$
is isomorphic. Therefore we obtain the proposition.
\end{proof}
Now we construct a bracket on $H_0(\overline{{\rm{Hom}}}(B(A), A^{\vee}))$. First, we define  the {\it Connes's operator} $B : H_*(\overline{{\rm{Hom}}}(F^pB(A), A^{\vee})) \rightarrow H_{*+1}(\overline{{\rm{Hom}}}(F^{p-1}B(A), A^{\vee}))$ by
\begin{eqnarray*}
\lefteqn{B(\varphi)(\omega_1, \cdots, \omega_{p-1})(\omega)} \\
& = & \sum_{0\leq k\leq p-1}(-1)^{(\varepsilon_k+1)(\varepsilon_{p-1}-\varepsilon_k)}\varphi(\omega_{k+1}, \cdots ,\omega_{p-1},\omega,\omega_1,\cdots\omega_k)(1).
\end{eqnarray*}
Composing these maps and the cup product, we can define a bracket on \\ $H_0(\overline{{\rm{Hom}}}(F^pB(A), A^{\vee}))$ by
$$
[\varphi_1, \varphi_2] = -P(P^{-1}B\varphi_1\cup P^{-1}B\varphi_2) \in H_0(\overline{{\rm{Hom}}}(F^{p-1}B(A), A^{\vee})).
$$

Take 2$g$ closed 1-forms on $M$, $\alpha_1,\cdots,\alpha_g,\beta_1,\cdots\beta_g$, such that $\int_M \alpha_i\wedge\beta_j = \delta_{ij}$. Let $\{ \overline{E}^r_{p. q} \}$ denote the spectral sequence of $\overline{{\rm{Hom}}}(B(A), A^{\vee})$ associated with $F^pB(A)$. Notice that the cyclic group $\mathbb{Z}/p\mathbb{Z}$ acts on $\overline{E}^1_{p, -p}$ $\cong$ ${\rm{Hom}}(\otimes^pH^1(A), \mathbb{R})$ by
$$
\iota\varphi(\omega_1, \cdots, \omega_p) = \varphi(\omega_2, \cdots, \omega_p, \omega_1)
$$
where $\iota$ is a generator of $\mathbb{Z}/p\mathbb{Z}$. The bracket 
$ [ \ , \ ] : \overline{E}^1_{p, -p} \otimes \overline{E}^1_{q, -q} \rightarrow \overline{E}^1_{p+q-2, -p-q+2} $
is
\begin{eqnarray*}
\lefteqn{[\varphi_1, \varphi_2](\omega_1, \cdots, \omega_{p+q-2})} \\
 & & = \sum_{i, m, n}\iota^m\varphi_1(\alpha_i, \omega_1, \cdots, \omega_{p-1})\varrho^n \varphi_2(\beta_i, \omega_p, \cdots, \omega_{p+q-2}) \\
 & & \hspace{2cm} -\iota^m\varphi_1(\beta_i, \omega_1, \cdots, \omega_{p-1})\varrho^n \varphi_2(\alpha_i, \omega_p, \cdots, \omega_{p+q-2}) \\  
\end{eqnarray*}
where $\iota$ and $\varrho$ are generators of $\mathbb{Z}/p\mathbb{Z}$ and $\mathbb{Z}/q\mathbb{Z}$, respectively. \\

\begin{proposition}
The following diagram commutes for $p, q$ $\geq $ 1:
$$\begin{CD}
\tilde{J^p}/\tilde{J^{p+1}} \otimes \tilde{J^q}/\tilde{J^{q+1}} @ >>> \overline{E}_{\infty }^{p, -p} \otimes \overline{E}_{\infty }^{q, -q} \\
 @ V [ \ , \ ] VV @ V [ \ , \ ] VV \\
\tilde{J}^{p+q-2}/\tilde{J}^{p+q-1} @ >>> \overline{E}_{\infty }^{p+q-2, -p-q+2} 
\end{CD}$$
\end{proposition}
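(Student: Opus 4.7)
The plan is to compute both brackets on explicit representatives and verify term-by-term agreement via Poincaré duality on the surface. By the calculation at the end of the proof of Lemma 5.5, a class in $\tilde{J^p}/\tilde{J^{p+1}}$ is represented by $(\sigma_1-x)\cdots(\sigma_p-x)$ with $\sigma_i\in\Omega_xM$, and the top horizontal arrow sends it to the functional
$$
\varphi_\sigma(\omega_1,\ldots,\omega_p)=\prod_{i=1}^{p}\int_{\sigma_i}\omega_i \in \overline{E}_\infty^{p,-p},
$$
with the analogous description of $\varphi_\tau$ coming from $\tau=(\tau_1-y)\cdots(\tau_q-y)$. After a small homotopy (as in Proposition 6.2) I may assume all loops are immersed and every pair $\sigma_i,\tau_j$ meets transversally with all intersection sets disjoint.

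Next I would expand each side. Proposition 6.2 expresses $[\sigma,\tau]$ as a sum over pairs $(i,j)\in\{1,\ldots,p\}\times\{1,\ldots,q\}$ and intersection points $s\in\sigma_i\sharp\tau_j$ of $\varepsilon(s;\sigma_i,\tau_j)$ times a cyclically arranged product of the surviving $p+q-2$ loops, with the connecting arcs $\gamma_{s,x},\gamma_{s,y}$ re-basing everything at $x$ or $y$; under $\psi_{p+q-2}$ each such term becomes a product of integrals of $\omega_1,\ldots,\omega_{p+q-2}$ over those loops. On the $E_\infty$-side, plugging $\varphi_\sigma,\varphi_\tau$ into the bracket formula and using the identity
$$
\sum_{s\in\sigma_i\sharp\tau_j}\varepsilon(s;\sigma_i,\tau_j)=\sum_{k}\Bigl(\int_{\sigma_i}\alpha_k\Bigr)\Bigl(\int_{\tau_j}\beta_k\Bigr)-\Bigl(\int_{\sigma_i}\beta_k\Bigr)\Bigl(\int_{\tau_j}\alpha_k\Bigr),
$$
which follows from Poincaré duality on the closed oriented surface together with $\int_M\alpha_k\wedge\beta_l=\delta_{kl}$, the inner sum over the symplectic basis reproduces exactly the algebraic intersection count at the pair $(\sigma_i,\tau_j)$. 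The cyclic permutations $\iota^m$ and $\varrho^n$ in the bracket formula enumerate which entry of $\sigma$ (respectively $\tau$) is paired off against $\alpha_k$ or $\beta_k$, matching the choice of indices $(i,j)$ on the Goldman side.

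The main obstacle will be sign and cyclic-ordering bookkeeping. I would unpack $[\varphi_1,\varphi_2]=-P(P^{-1}B\varphi_1\cup P^{-1}B\varphi_2)$ by reading the Connes operator $B$ as extracting one entry of $\varphi_\sigma$ to be evaluated at the constant $1$ (the loop surgered at $s$), the two copies of $P^{-1}$ as inserting the fundamental class of $M$, and the cup product followed by $P$ as the intersection pairing implemented through $\int_M\alpha_k\wedge\beta_l=\delta_{kl}$. Once these identifications are unpacked and the Koszul signs from Section 3 are tracked, each intersection point $s$ in the Goldman expansion will correspond to exactly one $(k,m,n)$-term on the $E_\infty$-side with the matching sign, and commutativity of the diagram will follow.
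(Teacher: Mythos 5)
Your outline is essentially sound, but it follows a genuinely different route from the paper. You reduce everything to the homological identity that the algebraic intersection number of $\sigma_i$ and $\tau_j$ equals $\sum_k\bigl(\int_{\sigma_i}\alpha_k\int_{\tau_j}\beta_k-\int_{\sigma_i}\beta_k\int_{\tau_j}\alpha_k\bigr)$, after observing that modulo $\tilde{J}^{p+q-1}$ each Goldman term attached to an intersection point $s\in\sigma_i\sharp\tau_j$ contributes the same class (the length-$(p+q)$ term of the expansion in Proposition 6.2 lies in higher filtration, and conjugation by the connecting arcs acts trivially on the associated graded, so only $\varepsilon(s;\sigma_i,\tau_j)$ and the pair $(i,j)$ survive). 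The paper instead argues at the level of forms and chains: it represents the intersection count by the Thom class of the diagonal (with supports arranged relative to the neighborhoods $U_p$ and the tubular neighborhoods of $a_i,b_i$), replaces that Thom class by its K\"unneth decomposition in the $\alpha_j,\beta_j$ up to an exact form whose contribution dies by Stokes together with Lemma 6.6, and then converts the resulting integrals over $e_\sharp\sigma\times e_\sharp\tau$ into the cyclic sums via the Getzler--Jones--Petrack formula (Theorem 6.7); the restriction to forms supported in the tubular neighborhoods of $a_j,b_j$ is what makes the basepoint-moving Lemma 6.5 work. What your route buys is brevity and transparency: the conjugation-triviality argument replaces Lemmas 6.5--6.7 entirely. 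What it presupposes is the explicit $\overline{E}^1$-formula for $[\,\ ,\,]$ in terms of $\iota^m,\varrho^n,\alpha_i,\beta_i$; the paper asserts this formula, but if you intend to re-derive it from $[\varphi_1,\varphi_2]=-P(P^{-1}B\varphi_1\cup P^{-1}B\varphi_2)$ as you sketch, you will need the full symplectic conditions $\int_M\alpha_i\wedge\alpha_j=\int_M\beta_i\wedge\beta_j=0$ (available by taking $\alpha_i,\beta_i$ to be Thom classes of the standard curves, not just $\int_M\alpha_i\wedge\beta_j=\delta_{ij}$) in order to invert $P$ on the $E^1$-term, and the deferred sign and cyclic-ordering bookkeeping is exactly where the paper's minus signs (in the definition of the bracket and in $\int\Phi=-\varepsilon$) must be reconciled with the sign in Proposition 6.2; these points should be written out, but none of them looks like an obstruction.
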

\begin{proof} Take $\sigma$ = $(\sigma_1-x)\cdots(\sigma_p-x)$ $\in$ $F_p\overline{C_0}(LM)$,  $\tau$ = $(\tau_1-y)\cdots(\tau_q-y)$ $\in$ $F_q\overline{C_0}(LM)$. Take 2$g$ curves in $M$, $a_i, b_i$, as in Figure 1. Assume that $\sigma_i$ and $\tau_j$, $a_k$, or $b_k$, intersect transversally for any $i, j, k$. Also assume that $\tau_j$ and $a_k$, or $b_k$, intersect transversally for any $j, k$. 

Assume that all the intersection points are distinct. Then for any $i, j, k$,  we can take each tubular neighborhoods of $a_i$ and $b_i$ so that it does not  include some neighborhoods of intersection points of $\sigma_j$ and $\tau_k$. We fix such neighborhoods  of intersection points and denote them by $U_p$ for each $p$. We can also take a tubular neighborhood of the diagonal map from $M$ to $M$$\times$$M$ outside those neighborhoods of intersection points of $\sigma_i$ and $\tau_j$ for any $i, j$  i.e.
$$
N_{\Delta} \cap \Bigr(\sigma_i \bigr(S^1\setminus\cup_p \sigma_i^{-1}(U_p) \bigl) \times \tau_j \bigr(S^1\setminus\cup_p \tau_j^{-1}(U_p) \bigl) \Bigl) = \phi, \ \forall i, j.
$$ 
Here $N_{\Delta}$ denotes the tubular neighborhood of the diagonal map. Thom class $\Phi$ of this tubular neighborhood satisfies
$$
\int_{\sigma_i|_{\sigma_i^{-1}(U_p)}\times\tau_j|_{\tau_j^{-1}(U_p)}} \Phi = -\varepsilon(p; \sigma_i,\tau_j),
$$
where $\varepsilon(p; \sigma_i,\tau_j)$ is the intersecion number of $\sigma_i$ and $\tau_j$ at $p$.
\begin{figure}
\includegraphics[width=15cm, clip]{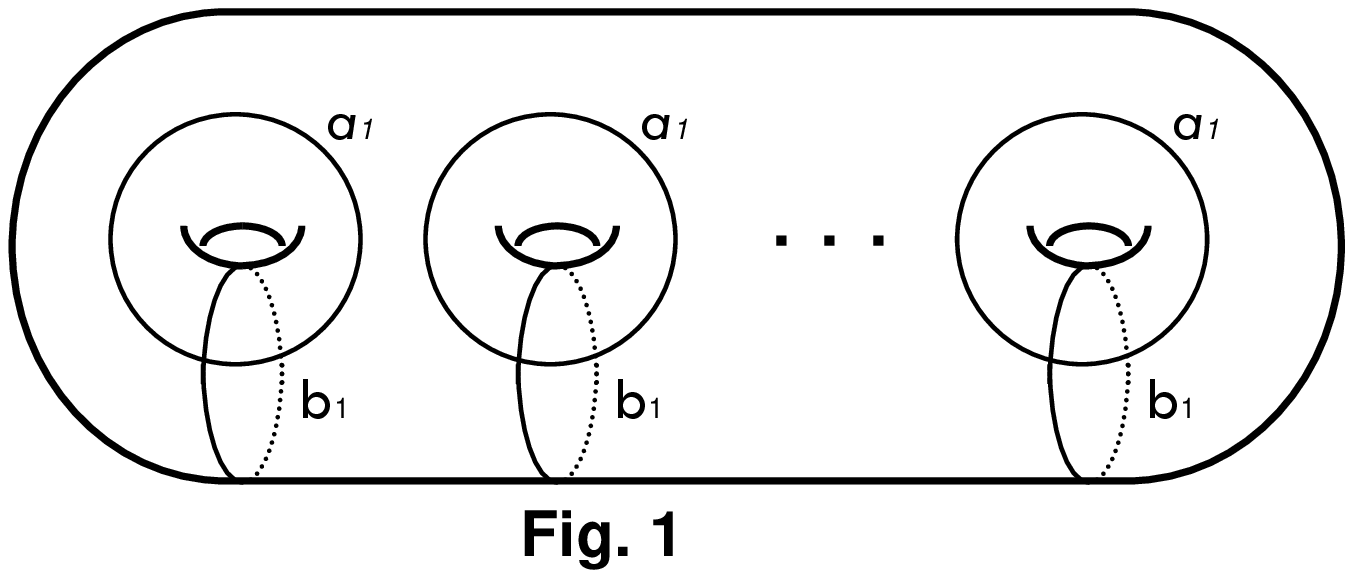}
\end{figure}

Define $e_\sharp$ : $C_0(LM)$ $\rightarrow$ $C_1(LM)$ by
$e_\sharp\gamma(\xi)(t) = \gamma(\xi + t)$.
Let $\omega_k$, $1 \leq k \leq n$,  be differential forms on $M$ which has its support inside the tubular neighborhoods of $a_i$ and $b_i$. Then
\begin{eqnarray*}
\lefteqn{ \int_{[\sigma_i, \tau_j]} \int \omega_1\cdots \omega_{n}  } \\
&& = \sum_{p \in \sigma_i\sharp\tau_j, k}\varepsilon(p ; \sigma_i, \tau_j)\int_{(\sigma_i)_p} \int\omega_1\cdots\omega_k \int_{(\tau_j)_p}\int\omega_{k+1}\cdots\omega_n  \\ 
 & & = - \sum_{p\in\sigma_i\sharp\tau_j, k} \int_{\sigma_i|_{\sigma_i^{-1}(U_p)} \times\tau_j|_{\tau_j^{-1}(U_p)}} \Phi \int_{(\sigma_i)_p}\int\omega_1\cdots\omega_k\int_{(\tau_j)_p} \int\omega_{k+1}\cdots\omega_n \\
 &&  =  - \sum_k \int_{e_\sharp \sigma_i\times e_\sharp \tau_j} \pi^*\Phi \wedge  p_1^*\int \omega_1\cdots \omega_k
\wedge p_2^* \int \omega_{k+1} \cdots \omega_n.
\end{eqnarray*}
Here $p_1, p_2 : LM \times LM \rightarrow LM$ are the projections. The last equality is obtained by the following lemma.

\begin{lemma} If $p$ $\in$ $\sigma_i\sharp\tau_j$ and $p'$ $\in$ $U_p\cap\sigma_i([0, 1])$, then 
$$
\int_{(\sigma_i)_p}\int \omega_1\cdots\omega_n = \int_{(\sigma_i)_{p'}}\int \omega_1\cdots\omega_n.
$$
\end{lemma}
\begin{proof}F
Let $\gamma$ be the curve from $p$ to $p'$ along $\sigma_i$ inside $U_p$. 
If $\gamma$ and $\sigma$ are in the same direction, then
\begin{eqnarray*}
\begin{split}
\int_{(\sigma_i)_{p'}} & \int \omega_1\cdots\omega_n  = \int_{\gamma\cdot(\sigma_i)_{p'}} \int\omega_1\cdots\omega_n  = \int_{(\sigma)_p\cdot\gamma}\int \omega_1\cdots\omega_n   \\ 
& =\int_{(\sigma)_p}\int \omega_1\cdots\omega_n. 
\end{split}
\end{eqnarray*}
We can also verify the case where $\gamma$ is in the direction opposite to $\sigma$ in the same way. 
\end{proof}
We have the equality
\begin{eqnarray*} 
\lefteqn{ \sum_k - \int_{e_\sharp\sigma \times e_\sharp\tau } \pi^* \Phi \wedge p_1^*\int\omega_1\cdots\omega_k \wedge p_2^*\int\omega_{k+1}\cdots\omega_{p+q-2} } \\
 && = \sum_{j, k}\int_{e_\sharp\sigma\times e_\sharp\tau} \pi^*\Bigr( -p_1^*(\alpha_1\wedge\beta_1)-p_2^*(\alpha_1\wedge\beta_1) + p_1^*\alpha_j\wedge p_2^*\beta_j - p_1^*\beta_j\wedge p_2^*\alpha_j \Bigl)  \\ 
 && \hspace{2cm} \wedge p_1^*\int\omega_1\cdots\omega_k\wedge p_2^*\int \omega_{k+1}\cdots\omega_{p+q-2} \\
\end{eqnarray*}
In fact, if $\eta$ $\in$ $\Lambda (M \times M)$ then
\begin{eqnarray*}
\lefteqn{ (-1)^{|\eta|+1}\int_{e_\sharp\sigma \times e_\sharp\tau } \pi^*d\eta \wedge p_1^*\int\omega_1\cdots\omega_k\wedge p_2^*\int\omega_{k+1}\cdots\omega_{p+q-2}} \\
  & & = \int_{e_\sharp\sigma \times e_\sharp\tau } \pi^*\eta\wedge d\Bigr( p_1^*\int\omega_1\cdots\omega_k\wedge p_2^*\int\omega_{k+1}\cdots\omega_{p+q-2} \Bigl) \\
  & & \hspace{1cm}+  (e_\sharp\sigma)^*\int\omega_1\cdots \omega_k \bigwedge  (e_\sharp\tau)^*\int\omega_{k+1}\cdots\omega_j\wedge\omega_{j+1}\cdots\omega_{p+q-2} \Bigl) \\
  & & = 0.
\end{eqnarray*}
The last equality is obtained by the following lemma.
\begin{lemma} If $\sigma \in F_p\overline{C_0}(LM)$, then
$$ 
(e_\sharp\sigma)^*\int\omega_1\cdots\omega_{p-2} = 0.
$$
\end{lemma}
\begin{proof}
It suffices to show the case $\sigma$ = $(\tau_1-x)\cdots(\tau_p-x)$ where $x$ $\in$ M and $\tau_i$ $\in$ $\Omega_xM$. We define $\bar{\tau_i}$ $\in$ $\Omega_xM$ by
$$
\bar{\tau_i}(t) = \begin{cases}
\tau_i(pt), & \mbox{if}\; (i-1)/p\leq t\leq i/p \\
0, & \mbox{otherwise}. \\
\end{cases}  
$$
Let $\bar{\sigma}$ denote $(\bar{\tau_1}-x)\cdots(\bar{\tau_p}-x)$. It can be shown that $e_\sharp\bar{\sigma}$ restricted on $[(i-1)/p , i/p]$ is contained in $F_{p-1}\overline{C_1}(LM)$ for any $i$. Therefore  
$$
(e_\sharp\sigma)^*\int\omega_1\cdots\omega_{p-2} =(e_\sharp\bar{\sigma})^*\int\omega_1\cdots\omega_{p-2} = 0.
$$    
\end{proof}

Jones, Geztler, and Petrack describes the map $e_\sharp$ in terms of iterated integrals by the following theorem.
\begin{theorem}[Geztler-Jones-Petrack \cite{gjp}] If $\sigma$ $\in$ $C_0(LM)$ and $\omega, \omega_i$ $\in$ $\Lambda^1 M$, 1 $\leq$ i $\leq$ p, then
$$
\int_{e_\sharp\sigma} \pi^*\omega\wedge\int\omega_1\cdots\omega_p = \sum_k\int_\sigma\int\omega_k\cdots\omega_p\omega\omega_1\cdots\omega_{k-1}.
$$ 
\end{theorem}
This theorem asserts the equality
\begin{eqnarray*}
\lefteqn{ \sum_{j, k}\int_{e_\sharp\sigma\times e_\sharp\tau} \pi^*\Bigr( -p_1^*(\alpha_1\wedge\beta_1)-p_2^*(\alpha_1\wedge\beta_1) + p_1^*\alpha_j\wedge p_2^*\beta_j - p_1^*\beta_j\wedge p_2^*\alpha_j \Bigl) } \\ 
 && \hspace{2cm} \wedge p_1^*\int\omega_1\cdots\omega_k\wedge p_2^*\int \omega_{k+1}\cdots\omega_n \\
 && = \sum_{j, k, l} \int_{\sigma} \int \omega_{k+1}\cdots\omega_{p-1}\alpha_j\omega_1\cdots\omega_k \int_{\tau} \int \omega_{l+1}\cdots\omega_{p+q-2}\beta_j\omega_p\cdots\omega_l \\
 && - \int_{\sigma} \int \omega_{k+1}\cdots\omega_{p-1}\beta_j\omega_1\cdots\omega_k \int_{\tau} \int \omega_{l+1}\cdots\omega_{p+q-2}\alpha_j\omega_p\cdots\omega_l 
\end{eqnarray*}
Finally we obtain the equality
\begin{eqnarray*}
\lefteqn{ \int_{[\sigma, \tau]} \int \omega_1\cdots \omega_{p+q-2}  } \\
 && = \sum_{j, k, l} \int_{\sigma} \int \omega_{k+1}\cdots\omega_{p-1}\alpha_j\omega_1\cdots\omega_k \int_{\tau} \int \omega_{l+1}\cdots\omega_{p+q-2}\beta_j\omega_p\cdots\omega_l \\
 && - \int_{\sigma} \int \omega_{k+1}\cdots\omega_{p-1}\beta_j\omega_1\cdots\omega_k \int_{\tau} \int \omega_{l+1}\cdots\omega_{p+q-2}\alpha_j\omega_p\cdots\omega_l
\end{eqnarray*}
Since we can take $\omega_i$ $\in$ $H^1(M)$, $1 \leq i \leq p+q-2$, so that their support are inside the tubular neighborhoods of $a_j$ and $b_j$,  we obtain the proposition.
\end{proof}

\begin{proof}[\it Proof of theorem 6.1.]
We obtain the following isomorphism of Lie algebras by proposition 6.4.
$$
\varprojlim_p\mathbb{R}\tilde{\pi}/\tilde{J^p} \cong H_0(\overline{{\rm{Hom}}}(B(A), A^{\vee}).
$$
To obtain the isomorphism of Lie algebras
$$
H_0(\overline{{\rm{Hom}}}(B(A), A^{\vee}) \cong H_0({\rm{Hom}}(B(H^*(M)), H^*(M)^{\vee}),
$$
we introduce the following lemma, which asserts the formality of the compact  ${\rm K\ddot{a}hler}$ manifolds.
\begin{lemma}[$dd^c Lemma$, Deligne-Griffiths-Morgan-Sullivan \cite{dgms}] Let X be a compact $K\ddot{a}hler$ manifold and $d^c = J^{-1}dJ$ where J gives the complex structure in the cotangent bundle. If $\alpha$ is a differential form on X such that d$\alpha$ = 0 and $d^c\alpha$ = 0, and such that $\alpha = d\gamma$, then $\alpha = dd^c\beta$ for some $\beta$.
\end{lemma}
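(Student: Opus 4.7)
The plan is to reduce the $dd^c$-lemma to Hodge theory on the compact Kähler manifold $X$, using the Kähler identities as the essential input. First I would invoke the Kähler identities $[\Lambda,\bar\partial] = -i\partial^*$ and $[\Lambda,\partial] = i\bar\partial^*$ and extract three structural consequences that drive the argument: (a) all four Laplacians $\Delta_d$, $\Delta_{d^c}$, $\Delta_\partial$, $\Delta_{\bar\partial}$ agree up to universal positive scalars, so there is a single space of harmonic forms, a single orthogonal projection $h$, and a single Green operator $G$ serving all of them; (b) the differentials anticommute, $dd^c = -d^c d$; and (c) $d$ anticommutes with $(d^c)^*$. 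A corollary of (b) and (c) is that each of $d$ and $d^c$ commutes with $\Delta$, hence with $h$ and with $G$.

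Next I would apply the Hodge decomposition of $\Delta_{d^c}$ to the primitive $\gamma$:
\begin{equation*}
\gamma \;=\; h\gamma \;+\; d^c (d^c)^* G\gamma \;+\; (d^c)^* d^c G\gamma,
\end{equation*}
and apply $d$ term by term. The first piece vanishes because harmonic forms are $d$-closed (harmonic for $\Delta_d$ equals harmonic for $\Delta_{d^c}$, and such forms lie in $\ker d\cap \ker d^*$). The middle piece is already in the desired shape: it equals $dd^c\beta$ with $\beta = (d^c)^* G\gamma$. The third piece is dispatched by sliding $d$ past $(d^c)^*$, past $d^c$, past $G$, and then invoking the hypothesis $d^c\alpha = 0$:
\begin{equation*}
d(d^c)^* d^c G\gamma \;=\; -(d^c)^* d\, d^c G\gamma \;=\; (d^c)^* d^c\, dG\gamma \;=\; (d^c)^* d^c G\alpha \;=\; (d^c)^* G\, d^c\alpha \;=\; 0.
\end{equation*}
Summing the three pieces yields $\alpha = d\gamma = dd^c\beta$ with $\beta = (d^c)^* G\gamma$, which is the desired conclusion.

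The substantive obstacle is step one, the derivation of the Kähler identities themselves and the consequent equality $\Delta_d = \Delta_{d^c}$. The standard route is the Bochner/flat-model principle: the identities are first order in derivatives and $J$ is parallel for a Kähler metric, so they can be checked pointwise against the flat model on $\mathbb{C}^n$ and then transferred globally by orthonormal frames. Once these identities are in hand, the $dd^c$-lemma is essentially a finite calculation in shuffling four operators past one another, using $d^c\alpha = 0$ at the very end; the argument above compresses that bookkeeping into a one-line chain of equalities.
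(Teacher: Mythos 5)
Your proof is correct. Note, however, that the paper itself offers no proof of this lemma: it is quoted verbatim as a known result and attributed to Deligne--Griffiths--Morgan--Sullivan \cite{dgms}, so there is no internal argument to compare against. What you have written is the standard Hodge-theoretic proof found in that reference (and in Griffiths--Harris): the equality $\Delta_d=\Delta_{d^c}$ coming from the K\"ahler identities, the Hodge decomposition of the primitive $\gamma$ with respect to $\Delta_{d^c}$, the vanishing of $d$ on the harmonic piece, and the disposal of the $(d^c)^*d^cG\gamma$ term by commuting $d$ through $(d^c)^*$, $d^c$, and $G$ and then using $d^c\alpha=0$. Each of these steps checks out, so your argument is a faithful reconstruction of the proof the paper delegates to its citation.
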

{\bf Cor.} {\it There are quasi-isomorphisms of differential graded algebras }
$$
(\Lambda X, d) \leftarrow ({\rm{ker}}d^c, d) \rightarrow (H_{d^c}^*(X), 0).
$$
Notice that a closed oriented surface endowed with a complex structure become a $\rm{K\ddot{a}hler}$ manifolds for the dimensional reason. Therefore the following lemma completes the proof of the theorem. 
\begin{lemma} If $f : A_1 \rightarrow A_2$ is a quasi-isomorphism of differential graded algebras, then the map induced by $f$
$$
H_0(\overline{{\rm{Hom}}}(B(A_1), A_1^{\vee}) \rightarrow H_0(\overline{{\rm{Hom}}}(B(A_2), A_2^{\vee})
$$
is an isomorphism. 
\end{lemma}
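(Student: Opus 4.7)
The plan is to compare the spectral sequences of $\overline{{\rm{Hom}}}(B(A_i), A_i^{\vee})$, $i=1,2$, associated with the bar-length filtration $F^{p}B(A_i)$, and then pass from the comparison on each finite truncation to the inverse limit that computes $H_{0}$ of the full complex. By Proposition 3.2 the $E_{1}$-page of the spectral sequence on a truncation $\overline{{\rm{Hom}}}(F^{p}B(A_i), A_i^{\vee})$ is ${\rm{Hom}}(\otimes^{q}sH(A_i), H(A_i)^{\vee})$ for $0\leq q\leq p$, and this identification is natural in $A_i$.

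First I would reduce to the case $A_i^{0}=\mathbb{R}$, as at the beginning of Section 4: replace $A_i$ by a dg subalgebra $\overline{A_i}$ with $\overline{A_i}^{0}=\mathbb{R}$ realizing the same cohomology, and lift $f$ (up to homotopy) to a quasi-isomorphism of the reductions. Since $f$ induces an isomorphism $H(A_{1})\cong H(A_{2})$, it induces an isomorphism of $E_{1}$-pages, and because the filtration on each truncation is finite the spectral sequence converges strongly; the comparison theorem then yields an isomorphism on $H_{*}$ of each truncation. In particular $f$ induces an isomorphism $H_{0}(\overline{{\rm{Hom}}}(F^{p}B(A_{1}), A_{1}^{\vee}))\cong H_{0}(\overline{{\rm{Hom}}}(F^{p}B(A_{2}), A_{2}^{\vee}))$ for every $p$.

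To conclude, I would identify $H_{0}(\overline{{\rm{Hom}}}(B(A_i), A_i^{\vee}))$ with $\varprojlim_{p}H_{0}(\overline{{\rm{Hom}}}(F^{p}B(A_i), A_i^{\vee}))$, as implicit in the last display of the proof of Theorem 5.1. The tower has surjective bonding maps (dual to the inclusions $F^{p}\subset F^{p+1}$), hence is Mittag-Leffler, $\varprojlim^{1}$ vanishes, and the level-wise isomorphisms assemble into the desired isomorphism on $H_{0}$. The main obstacle I anticipate is the compatibility of the reduction $A_i\leadsto\overline{A_i}$ with the quasi-isomorphism $f$ and with the Chen normalization $\overline{{\rm{Hom}}}$; these issues are essentially bookkeeping, but they require care to guarantee that the comparison maps genuinely commute with the tower structure and with the differentials on the successive pages of the spectral sequences.
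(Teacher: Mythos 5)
Your plan is essentially the paper's own proof: the paper likewise reduces to the truncations $\overline{{\rm{Hom}}}(F^pB(A_i), A_i^{\vee})$ (``it suffices to verify \dots for any $p$'') and concludes from the $E_1$-level isomorphism ${\rm Hom}(\otimes sH(A_1), H(A_1)^{\vee}) \cong {\rm Hom}(\otimes sH(A_2), H(A_2)^{\vee})$ supplied by the quasi-isomorphism, exactly as you propose via Proposition 3.2 (whose proof already contains the reduction to $\overline{A}^0 = \mathbb{R}$ that you perform separately). The only real difference is that you spell out the passage to the inverse limit over $p$, which the paper leaves implicit; there, what you actually need is not that $\varprojlim^1$ vanishes (surjective bonding maps of the tower of complexes give the Milnor sequence, but the relevant $\varprojlim^1$ is of the $H_1$'s), rather that the levelwise isomorphisms in all degrees identify the Milnor sequences of the two towers, after which the five lemma gives the isomorphism on $H_0$.
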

\begin{proof}
It suffices to verify that the map induced by $f$
$$
\overline{f} : H_0(\overline{{\rm{Hom}}}(F^pB(A_1), A_1^{\vee}) \rightarrow H_0(\overline{{\rm{Hom}}}(F^pB(A_2), A_2^{\vee})
$$
is an isomorphism for any $p$. On $E^1$-level, the map induced by $f$
$$
{\rm Hom}(\otimes sH(A_1), H(A_1)^{\vee}) \rightarrow {\rm Hom}(\otimes sH(A_2), H(A_2)^{\vee}))
$$
is an isomorphism because $f$ is quasi-isomorphism. Therefore we obtain the lemma.
\end{proof}
Therefore we obtain the theorem.
\end{proof}

\end{document}